\DeclareMathOperator*{\argminB}{argmin}
\DeclareMathOperator*{\argmaxB}{argmax}
\numberwithin{equation}{section}
\theoremstyle{plain}
\newtheorem{thm}{Theorem}[section]
\theoremstyle{remark}
\newtheorem{exam}[thm]{Example}
\newtheorem{rem}[thm]{Remark}
\newtheorem{prop}[thm]{Proposition}
\newtheorem{assm}[thm]{Assumption}
\newtheorem{defn}[thm]{Definition}
\newtheorem{lem}[thm]{Lemma}
\begin{document}

\begin{frontmatter}
\title{High dimensional deformed rectangular matrices with applications in matrix denoising}
\runtitle{High dimensional matrix denoising }

\begin{aug}
\author{\fnms{Xiucai} \snm{Ding}\ead[label=e1]{xiucai.ding@mail.utoronto.ca.}}

\runauthor{Xiucai. Ding}

\affiliation{University of Toronto}

\address{Department of Statistics, 
University of Toronto, 
Toronto, Ontario, M5S 3G3, 
Canada\\
\printead{e1}\\
\phantom{E-mail:\ }
\printead*{}}
\end{aug}

\begin{abstract} 
We consider the recovery of a low rank $M \times N$ matrix $S$ from its noisy observation $\tilde{S}$ in the high dimensional framework when $M$ is comparable to $N$.  We propose two efficient estimators for $S$ under two different regimes. Our analysis relies on the local asymptotics of the eigenstructure of large dimensional rectangular matrices with finite rank perturbation. We derive the convergent limits and rates for the singular values and vectors for such matrices.    
\end{abstract}


\begin{keyword}
\kwd{Random matrices}
\kwd{matrix denoising}
\kwd{singular value decomposition}
\kwd{rotation invariant estimation}
\end{keyword}

\end{frontmatter}

%
\section{Introduction}
Matrix denoising is important in many scientific endeavors. They appear prominently in singal processing \cite{TA}, image denoising \cite{ME}, machine learning \cite{YMB}, statistics \cite{GD2, GD1,JWHT}, empirical finance \cite{BBP2017, LCPB} and biology \cite{PGSR}. In these applications, researchers are interested in recovering the true deterministic matrix from a noisy observation. Consider that we can observe a noisy $M \times N$ data matrix $\tilde{S}_N$, where
\begin{equation}\label{defn_m11}
\tilde{S}_N=S_N+X_N,
\end{equation}
the deterministic matrix $S_N$ is known as the signal matrix and $X_N$ the noise matrix.  In the classic framework where $M$ is much smaller than $N,$ the truncated singular value decomposition (TSVD) is the default technique, see for example \cite{GVL}.  This method recovers $S_N$ with an estimator $ \hat{S}_N=\sum_{i=1}^m \mu_i \tilde{u}_i \tilde{v}_i^*$ using the truncated singular value decomposition, where $m < \min \{M, N\}$ denotes the truncation level, $\mu_i, \tilde{u}_i, \tilde{v}_i, \ i=1,2,\cdots,m$ are the singular values and vectors of $\tilde{S}$. We usually need to provide a threshold $\gamma$ to choose $m$ and use the singular values only when $\mu_i \geq \gamma.$ Two popular methods are the soft thresholding \cite{DD} and hard thresholding \cite{GD2}.

In recent years, the advance of technology has lead to the observation of massive scale data, where the dimension of  the variable is comparable to the length of the observation. In this situation, the TSVD will lose its validity.  To address this problem, in the present paper, we consider the matrix denoising problem (\ref{defn_m11}) by assuming $M$ is comparable to $N$ and estimate $S_N$ in the following two regimes:\\

\noindent {\bf Regime (1)}.  $S_N$ is of low rank and we have prior information that its singular vectors are sparse; \\
 
\noindent {\bf Regime (2)}.  $S_N$ is of low rank and we have no prior information on the singular vectors. \\

In regime (1), $S_N$ is called simultaneously low rank and sparse matrix. This type of matrix  has been heavily used in biology. A typical example is from the study of gene expression data \cite{PGSR}. In \cite{YMB}, Yang, Ma and Buja also consider such problem but from a quite different perspective. They do not take the local behavior of singular values and vectors into consideration. Instead, they use an adaptive thresholding method to recover $S_N$ in (\ref{defn_m11}). In regime (2), we are interested in looking at what is the best we can do in this case.  A natural (and probably necessary) assumption is rotation invariance \cite{BABP},  as the only information we know about the singular vectors is orthonormality.  It is notable that, in this case, our result coincides with the results proposed by Gavish and Donoho \cite{GD1},  where they consider the estimator from another perspective and restrict the estimator to be conservative (see Definition 3 in \cite{GD1}).

In this paper, we will study the convergent limits and rates of the singular values and vectors for the sequence of matrices $\tilde{S}_N$ defined in (\ref{defn_m11}). 
For the rest of the paper, we will omit the subscript $N$ for convenience  and write 
 \begin{equation}\label{defn_m1}
 \tilde{S}=S+X.
 \end{equation}
 To avoid repetition, we summarize the technical assumptions of the noise matrix $X$.
\begin{assm}\label{assum_whitenoise} We assume $X$ is a white noise matrix, where the entries $x_{ij}$ of $X$ are i.i.d random variables such that
\begin{equation*}
\mathbb{E}x_{ij}=0, \ \mathbb{E}|x_{ij}|^2=\frac{1}{N}.
\end{equation*}
Furthermore, we assume that for $l \in \mathbb{N},$ there exists some constant $C_l>0,$ such that  
\begin{equation} \label{defn_xij}
\mathbb{E}|\sqrt{N}x_{ij}|^l \leq C_l. 
\end{equation}
\end{assm} 
Denote the SVD of $S$ as
\begin{equation}\label{eq_svds}
S=UDV^*=\sum_{k=1}^r d_i u_i v_i^*,
\end{equation}
 where $D=\text{diag} \{d_1, \cdots, d_r \}, \ U=(u_1, \cdots, u_r), \ V=(v_1, \cdots, v_r), $
and where $u_i \in \mathbb{R}^M, \ v_i \in \mathbb{R}^N$ are orthonormal vectors and $r$ is a fixed constant. We also assume $d_1>d_2 > \cdots >d_r>0.$
Then (\ref{defn_m1}) can be written as
\begin{equation} \label{defn_m2}
\tilde{S}=X+UDV^{*}.
\end{equation}
Throughout the  paper,  we are interested in the following setup
\begin{equation} \label{notation_d}
 c_N:=\frac{N}{M},  \  \ \lim_{N \rightarrow \infty} c_N = c \in (0,\infty).
 \end{equation}
It is well-known that for the noise matrix $X,$ the spectrum of $XX^*$ satisfies the celebrated  Marchenco-Pastur (MP) law \cite{MP} and the largest eigenvalue satisfies the Tracy-Widom (TW) distribution \cite{TW}. Specifically,  denote $\lambda_i:=\lambda_i(XX^*), i=1,2,\cdots, K$, where $K=\min\{M,N\},$ as the eigenvalues of $XX^*$ in a decreasing fashion,  we have that
\begin{equation} \label{notation_edges}
\lambda_1=\lambda_{+}+O(N^{-2/3}), \ \ \lambda_{+}=(1+c^{-1/2})^2,
\end{equation}
holds with high probability. Furthermore, denote $\xi_i, \zeta_i$ as the singular vectors of $X,$ for some large constant $C>0,$ with high probability, we have \cite{DXC}
\begin{equation*}
\max_{k} \{|\xi_i(k)|^2+|\zeta_i(k)|^2\}=O(N^{-1}), \  i \leq C.
\end{equation*}

To sketch the behavior of $\tilde{S},$ we consider the case when $r=1$ in (\ref{defn_m2}). Assuming that the distribution of the entries of $X$ is  bi-unitarily invariant, Benaych-Georges and Nadakuditi established the convergent limits in \cite{BGN} using free probability theory. Denote $\mu_i:=\mu_i(\tilde{S} \tilde{S}^*), i=1,2, \cdots, K, $ they proved that when $d>c^{-1/4},$ $\mu_1$ would detach from the spectrum of the MP law and become an outlier. And when $d<c^{-1/4},$ $\mu_1$  converges to $\lambda_+$ and sticks to the spectrum of the MP law. For the singular vectors, denote $\tilde{u}_i, \ \tilde{v}_i$ as the left and right singular vectors of $\tilde{S}, i=1, 2,\cdots, K.$  They proved that when $d>c^{-1/4},$ $\tilde{u}_1, \ \tilde{v}_1$ would be concentrated on cones with axis parallel to $u_1, \ v_1$ respectively, and the apertures of the cones converged to some  deterministic limits. And when $d<c^{-1/4}, $  $\tilde{u}_1, \ \tilde{v}_1$ will be asymptotically perpendicular to $u_1, \ v_1$ respectively. We point out that similar results have been proved for the Wigner matrices with additive deformation and covaraince matrices with multiplication perturbation. For such results, we refer the readers to \cite{BBP, BKYY, DXC3, KY2, KY, P, PRS1, PRS2}.

Our computation and proof rely on the isotropic local MP law \cite{BEKYY, KY1, PY}.  These results say that the eigenvalue distribution of the sample covariance matrix $XX^*$ is close to the MP law, down to the spectral scale containing slightly more than one eigenvalue. These local laws are formulated using the Green functions,
\begin{equation}\label{def_green}
\mathcal G_1(z):=(XX^{*}-z)^{-1} , \ \ \ \mathcal G_2 (z):=(X^{*} X-z)^{-1} , \ \ \ z=E+i\eta \in \mathbb{C}^{+}.
\end{equation}
To illustrate our results and ideas, we give an overview of the present paper.  As we have seen from \cite{DXC, DY}, the self-adjoint linearization technique is quite useful in dealing with rectangular matrices. Hence, in a first step, we denote by
\begin{equation} \label{linearconstruction1}
\tilde{H}=
  \begin{bmatrix}
    0 & z^{1/2} \tilde{S}\\
    z^{1/2}\tilde{S}^* & 0 
  \end{bmatrix}
  = \begin{bmatrix}
    0 & z^{1/2} X\\
    z^{1/2}X^*& 0 
  \end{bmatrix}
  +\begin{bmatrix}
    0 & z^{1/2} UDV^*\\
    z^{1/2}VDU^*& 0 
  \end{bmatrix}
  =H+\mathbf{U}\mathbf{D} \mathbf{U}^{*},
\end{equation} 
where  $\mathbf{D}, \mathbf{U}$ are defined as 
\begin{equation}\label{defn_mathbfuv}
\mathbf{D}:=  \begin{bmatrix}
    0 & z^{1/2}D \\
    z^{1/2}D & 0 
  \end{bmatrix}, \ 
  \mathbf{U}:=  \begin{bmatrix}
    U & 0\\
    0 & V 
  \end{bmatrix}.
\end{equation}

Next we will give a heuristic description of our results.  We will always denote $\mu_1 \geq \cdots \geq \mu_K , \ K=\min\{M,N\}$
as the eigenvalues of $\tilde{S} \tilde{S}^*$ and  $\tilde{u}_i, \ \tilde{v}_i$ as the singular vectors of $\tilde{S}.$ And we denote $G(z)$ as the Green function of $H$. Consider $r=1$ in (\ref{defn_m2}) and by a standard perturbation discussion (see Lemma \ref{lem_perb}), we find that $\mu_1$ satisfies the equation $\det(\mathbf{U}^*G(\mu_1)\mathbf{U}+\mathbf{D}^{-1})=0.$ Using the isotropic local law in \cite{KY1}, we find that (see Lemma \ref{lem_anisotropic}) $G$ has a deterministic limit $\Pi$ when $N$ is large enough. Heuristically, the convergent limit of $\mu_1$ is determined by the equation $\det(\mathbf{U}^*\Pi(z)\mathbf{U}+\mathbf{D}^{-1})=0.$ An elementary calculation shows that, when $d>c^{-1/4}$, $\mu_1 \rightarrow p(d),$ where $p(d)$ is defined in (\ref{defn_pd}).

When $d>c^{-1/4},$ the largest eigenvalue $\mu_1$ will detach from the bulk and become an outlier around its \emph{classical location} $p(d)$. We would expect this happens under a scale of $N^{-1/3}.$ This can be understood in the following ways: increasing $d$ beyond the critical value $c^{-1/4}$, we expect $\mu_1$ to become an outlier,  where its location $p(d)$ is located at a distance greater than $O(N^{-2/3})$ from $\lambda_{+}.$ By using mean value theorem, the phase transition will take place on the scale when 
\begin{equation} \label{defn_bbphappens}
|d-c^{-1/4}| \geq O(N^{-1/3}).
\end{equation} 
When (\ref{defn_bbphappens}) happens, we also prove that 
\begin{equation}\label{intro_outlierev}
\mu_1=p(d)+O\left(N^{-1/2}(d-c^{-1/4})^{1/2}\right).
\end{equation}
Below this scale, we would expect the spectrum of $\tilde{S}\tilde{S}^*$ to stick to that of  $XX^{*}$. Especially, the largest eigenvalue $\mu_1$ still has the Tracy-Widom distribution with the scale $N^{-2/3}$, which reads as 
\begin{equation}\label{intro_nonoutlierev}
\mu_1=\lambda_+ +O(N^{-2/3}). 
\end{equation} 

For the singular vectors, when $d>c^{-1/4},$ we have $<u_1, \tilde{u}_1>^2 \rightarrow a_1(d),  <v_1, \tilde{v}_1>^2  \rightarrow a_2(d), $ where $a_1(d), a_2(d)$ are deterministic functions of $d$ and defined in (\ref{defn_a1a2}). For the local behavior, we will use an integral representation of Greens functions (see (\ref{intergralrepresentationv})). 
 Under the assumption that $d_i$'s are well-separated and satisfy (\ref{defn_bbphappens}), we prove that
\begin{equation} \label{intro_outlierevec}
<u_1, \tilde{u}_1>^2=a_1(d)+O(N^{-1/2}), \  <v_1, \tilde{v}_1>^2=a_2(d)+O(N^{-1/2}).
\end{equation} 
Below the scale of (\ref{defn_bbphappens}), we prove that
\begin{equation}\label{intro_nonoutlierevec}
<u_1, \tilde{u}_1>^2=O(N^{-1}), \  <v_1, \tilde{v}_1>^2=O(N^{-1}). 
\end{equation}

Armed with (\ref{intro_outlierev}), (\ref{intro_nonoutlierev}), (\ref{intro_outlierevec}) and (\ref{intro_nonoutlierevec}), we can go to the matrix denoising problem (\ref{defn_m2}) under the two different regimes. In the first regime, we assume there exists sparse structure of the singular vectors, in the case when $d>c^{-1/4},$ we would expect $\tilde{u}_1, \ \tilde{v}_1$ to be sparse as well. Hence, $\tilde{S}$ will be of sparse structure.  
Therefore, by suitably choosing a submatrix of $\tilde{S}$ and doing SVD for the submatrix, we can get an estimator for the singular vectors.  Our novelty is to truncate singular values and vectors simultaneously. For the estimation of singular values, we can reverse (\ref{intro_outlierev}) to get the estimator for $d.$ For the singular vectors, based on (\ref{intro_nonoutlierevec}), the truncation level should be much larger than $N^{-1/2}$ and we will use K-means clustering algorithm to choose such level. However, when $d<c^{-1/4},$ we can estimate nothing according to (\ref{intro_nonoutlierev}) and (\ref{intro_nonoutlierevec}).

In the second regime, as we have no prior information whatsoever on the true eigenbasis of $S,$ the only possibility is to use the eigenbasis of $\tilde{S}.$ This is equivalent to the assumption of rotation invariance. We will propose a consistent rotation invariant estimator (RIE) $\Xi(\tilde{S}),$ which  satisfies the following condition, 
\begin{equation} \label{defn_rotationinvarint}
\Omega_1 \Xi(\tilde{S}) \Omega_2= \Xi(\Omega_1 \tilde{S} \Omega_2),
\end{equation}
where $\Omega_1, \Omega_2$ are orthogonal (rotation) matrix in $\mathbb{R}^M, \mathbb{R}^N$ respectively.  Before concluding this section, we list our main contributions of this paper: \\

\noindent (i). We systematically study the local behavior of the singular values and vectors for  finite rank perturbation of large dimensional rectangular matrices of model (\ref{defn_m2}). We compute the convergent limits and rates for them.  \\

\noindent (ii). We provide two efficient estimators for the matrix denoising model (\ref{defn_m2}) under two different regimes.  We provide practical algorithms to compute the estimators. For the sparse estimation, as far as we know, our paper is the first one to truncate the singular values and vectors simultaneously.  \\


This paper is organized as follows. In Section \ref{section_results}, we give the main results of this paper. In Section \ref{section_statistics}, we propose the estimators for (\ref{defn_m2}) under two regimes.   In Section \ref{section_tool}, we record the basic tools for the proof of the main theorems. In Section \ref{section_ev}, we prove the main theorems listed in Section \ref{section_results}. 

\noindent{\bf Conventions.}  For two quantities $a_N$ and $b_N$ depending on $N$, the notation $a_N = O(b_N)$ means that $|a_N| \le C|b_N|$ for some positive constant $C>0$, and $a_N=o(b_N)$ means that $|a_N| \le c_N |b_N|$ for some positive constants $c_N\to 0$ as $N\to \infty$. We also use the notation $a_N \sim b_N$ if $a_N = O(b_N)$ and $b_N = O(a_N)$. We define the minimum of any two reals $a, b$ by $ a\wedge b.$  For  any matrix $A$, we denote by $A^{*}$ as the transpose of $A$ and  $||A||_F$ the Frobenius norm of $A$. We will also use $\bm{\sigma}(H)$ to denote the spectrum for any square matrix $H.$ And for any  rectangular matrix $S$ we use $\sigma_i(S)$ to denote its $i$-th largest singular value.

\section{Main results}\label{section_results}

Throughout the paper, we always use $\epsilon_1$ for a small constant and $D_1$ for a large constant.  Denote $\mathcal{R}:=\{1,2,\cdots, r\}$  and $\mathcal{O}$ as a subset of of $\mathcal{R}$ by
\begin{equation} \label{defn_mathcalo}
\mathcal{O}:=\{i: d_i \geq c^{-1/4}+N^{-1/3+\epsilon_0}\}, \ \epsilon_0 > \epsilon_1 \ \text{is a small constant},
\end{equation}
and  the number of outlier singular values as
\begin{equation}\label{defn_k+}
k^+=|\mathcal{O}|.
\end{equation}
Our results can be extended to a more general domain by denoting  $\mathcal{O}^{\prime}:=\{i: d_i\geq c^{-1/4}+N^{-1/3}\}.$
We will not pursue this generalization. For more details, we refer to \cite{BKYY}. 
\noindent For any subset $A \subset \mathcal{O},$ we define the projections on the left and right singular subspace of $\tilde{S}$ by
\begin{equation}\label{defn_projection}
\mathbf{P}_{l}:=\sum_{i \in A} \tilde{u}_i \tilde{u}^*_i, \  \mathbf{P}_{r}:=\sum_{j \in A} \tilde{v}_j \tilde{v}^*_j. 
\end{equation} 
We also need the non-overlapping condition, which was firstly introduced in \cite{BKYY}. 
\begin{defn} \label{defn_nonoverlapping} For $i=1,2,\cdots,M,$ the non-overlapping condition is written as
\begin{equation}\label{defn1_nonoverlapping}
\nu_i(A) \geq (d_i-c^{-1/4})^{-1/2}N^{-1/2+\epsilon_0},  
\end{equation}
where $\epsilon_0$ is defined in  (\ref{defn_mathcalo}) and $\nu_i(A)$  is defined by 
\begin{equation} \label{defn_nuai}
 \nu_i(A):= 
\begin{cases}
\min_{j \notin A}|d_i-d_j|, \ \text{if} \ i \in A, \\
\min_{j \in A}|d_i-d_j|, \ \text{if} \ i \notin A. 
\end{cases}
\end{equation}
\end{defn}

With the above preparation,  we state our main results of the singular values of $\tilde{S}.$ Denote 
\begin{equation} \label{defn_pd}
p(d)=\frac{(d^2+1)(d^2+c^{-1})}{d^2}.
\end{equation}
Recall $\tilde{S}$ defined in (\ref{defn_m2}) and $\mu_i$ are the eigenvalues of $\tilde{S} \tilde{S}^*.$ 
\begin{thm}\label{thm_location}
Under Assumption \ref{assum_whitenoise} and the assumption of (\ref{notation_d}),  for $ i=1,2,\cdots, k^+,$ where $k^+$ is defined in (\ref{defn_k+}), there exists some large constant $C>1$ such that $C\epsilon_1 < \epsilon_0,$ when $N$ is large enough, with $1-N^{-D_1}$ probability, we have
\begin{equation} \label{thm_location_outlier}
|\mu_i-p(d_i)| \leq N^{-1/2+C\epsilon_0}(d_i-c^{-1/4})^{1/2},
\end{equation}
where $p(d_i)$ is defined in (\ref{defn_pd}).  Moreover, for $j=k^++1, \cdots, r,$ we have 
\begin{equation}  \label{thm_location_bulk}
|\mu_j-\lambda_+| \leq N^{-2/3+C\epsilon_0},
\end{equation}
where $\lambda_+$ is defined in (\ref{notation_edges}).
\end{thm}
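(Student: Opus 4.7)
My plan is to reduce the problem to analyzing the roots of a deterministic $2r \times 2r$ determinantal equation via the linearization (\ref{linearconstruction1}), then use the anisotropic local law to transfer this analysis to the random matrix. Concretely, $\tilde H = H + \mathbf{U}\mathbf{D}\mathbf{U}^*$ carries the singular-value information of $\tilde S$, and by Lemma \ref{lem_perb} applied to the finite-rank shift $\mathbf{U}\mathbf{D}\mathbf{U}^*$, any $\mu \notin \bm{\sigma}(XX^*)$ is a squared singular value of $\tilde S$ if and only if $\det(\mathbf{U}^* G(\mu) \mathbf{U} + \mathbf{D}^{-1}) = 0$. Replacing the random Green function $G$ by its deterministic limit $\Pi$ of Lemma \ref{lem_anisotropic} and evaluating the block determinant in closed form reduces the deterministic analogue to a product of the form $\prod_{i=1}^{r}(z\, m_1(z) m_2(z) - d_i^{-2})$, where $m_1, m_2$ are Stieltjes transforms of the two diagonal blocks of $\Pi$. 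An elementary algebra then identifies the unique root outside the MP support associated with $d_i$ as $z = p(d_i)$, which is a simple root precisely when $d_i > c^{-1/4}$.

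To prove the outlier bound (\ref{thm_location_outlier}), I set $F(z) := \det(\mathbf{U}^*\Pi(z)\mathbf{U} + \mathbf{D}^{-1})$ for the deterministic function and $\tilde F(z)$ for its random counterpart, so that $\tilde F(\mu_i) = 0$. Lemma \ref{lem_anisotropic} provides uniform control of $|\tilde F(z) - F(z)|$ on a spectral domain just outside the bulk, and I will evaluate this at $z = p(d_i)$, which sits at distance $\kappa_i \sim (d_i - c^{-1/4})^2$ from the right edge. Taylor-expanding $F$ at $p(d_i)$ gives $F(\mu_i) = F'(p(d_i))(\mu_i - p(d_i)) + O((\mu_i - p(d_i))^2)$; a direct computation using $p'(c^{-1/4}) = 0$ and the square-root edge behavior $m_k'(\lambda_+ + \kappa) \sim \kappa^{-1/2}$ yields $|F'(p(d_i))| \sim (d_i - c^{-1/4})^{-1}$. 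Matching this against the anisotropic error at scale $N^{-1/2}\kappa_i^{-1/4} \sim N^{-1/2}(d_i - c^{-1/4})^{-1/2}$ yields the bound $|\mu_i - p(d_i)| \prec N^{-1/2}(d_i - c^{-1/4})^{1/2}$, with the $N^{C\epsilon_0}$ factor absorbed through the stochastic domination. The non-overlapping condition (\ref{defn1_nonoverlapping}) is what guarantees that each simple root can be tracked individually when several outliers are present.

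For the sticking estimate (\ref{thm_location_bulk}), note that for $j \in \mathcal R \setminus \mathcal O$, $d_j$ lies below the BBP threshold on the $N^{-1/3+\epsilon_0}$ scale and produces no outlier root of the deterministic equation. A counting argument based on the argument principle applied to $\tilde F$ on a contour enclosing $\{z : z \geq \lambda_+ + N^{-2/3+C\epsilon_0}\}$ shows that $\tilde S\tilde S^*$ has exactly $k^+$ eigenvalues above this level, hence $\mu_{k^++1} \leq \lambda_+ + N^{-2/3+C\epsilon_0}$, handling the upper bound for all $j \geq k^++1$. The matching lower bound follows from Weyl's singular-value interlacing for the rank-$r$ shift $UDV^*$, yielding $\mu_j \geq \lambda_{j+r}$, together with edge rigidity of $XX^*$ which forces $\lambda_{j+r} = \lambda_+ + O(N^{-2/3+\epsilon})$ for $j$ bounded.

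The main technical obstacle is calibrating the anisotropic error on the regime where $z$ sits just outside the MP support at distance $\kappa_i \sim (d_i - c^{-1/4})^2 \gtrsim N^{-2/3+2\epsilon_0}$ from the edge, and matching it against the vanishing of $F'(p(d_i))$ to extract the precise $(d_i - c^{-1/4})^{1/2}$ scaling rather than a weaker power. This boils down to a careful edge expansion of $m_1, m_2$ via their self-consistent equations near $\lambda_+$, along with a contour count in the sticking case that must remain uniform in $N$.
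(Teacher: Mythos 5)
Your overall route is the same as the paper's: linearize via (\ref{linearconstruction1}), characterize squared singular values through $\det(\mathbf{U}^*G(\mu)\mathbf{U}+\mathbf{D}^{-1})=0$ (Lemma \ref{lem_perb}), replace $G$ by $\Pi$ using the anisotropic law (in fact you need its extension down to the real axis outside the spectrum, Lemma \ref{lem_realparameter}, not just Lemma \ref{lem_anisotropic}), reduce to $\prod_i(z\,m_{1c}m_{2c}-d_i^{-2})$, and handle the sticking case by a count above $\lambda_++N^{-2/3+C\epsilon_0}$ plus Weyl interlacing and rigidity for the lower bound. That part matches the paper's Lemma \ref{lem_permissble} and the subsequent argument.

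However, there is a genuine gap in your treatment of the outliers: you invoke the non-overlapping condition (\ref{defn1_nonoverlapping}) ``so that each simple root can be tracked individually,'' but Theorem \ref{thm_location} does not assume any separation of the $d_i$'s; it must hold when several $p(d_i)$ coincide or lie within the error window of one another. In that near-degenerate regime your derivative-matching step breaks down quantitatively: with $F(z)=\det(\mathbf{U}^*\Pi(z)\mathbf{U}+\mathbf{D}^{-1})\propto z^r\prod_j(\mathcal T(z)-d_j^{-2})$, the estimate $|F'(p(d_i))|\sim (d_i-c^{-1/4})^{-1}$ silently assumes all other factors $\mathcal T(p(d_i))-d_j^{-2}$ are of order one, which fails exactly when $d_j\approx d_i$, so the Taylor expansion at $p(d_i)$ no longer pins down $\mu_i$. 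Relatedly, writing $F(\mu_i)=F'(p(d_i))(\mu_i-p(d_i))+\cdots$ presupposes an a priori localization of $\mu_i$ near $p(d_i)$ and a matching of eigenvalues to the $d_i$'s, which itself requires a counting mechanism. The paper supplies exactly this missing machinery: for a fixed $N$-independent configuration it builds permissible regions $I_i^+(\mathbf d)$, shows $M^r(x)$ is nonsingular outside them, and uses Rouch\'e's theorem to put precisely one eigenvalue in each interval; it then extends to arbitrary $N$-dependent $\mathbf d$ by a continuity (bootstrap) argument along a path $\mathbf d(t)$, and when intervals overlap it groups them into classes $\mathcal B_i$ and bounds the diameter of each cluster by $N^{-1/2+C\epsilon_0}(d_i-c^{-1/4})^{1/2}$, which is what rescues (\ref{thm_location_outlier}) without any separation hypothesis. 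To repair your proposal you would either have to reproduce this cluster-Rouch\'e plus continuity (or diameter) argument, or restrict the claim to well-separated outliers, which would prove less than the theorem states. The sticking part of your argument is essentially fine, provided you verify the contour estimate with the explicit exponent bookkeeping the paper does through the choice $C'>\max\{2,4\epsilon_1/\epsilon_0\}$ (the lower bound $|T^r(z)|\gtrsim N^{-1/3+C'\epsilon_0/2}$ must beat the anisotropic error near the real axis, and $\tilde F$ has poles at $\bm\sigma(XX^*)$, so the contour must avoid them via rigidity).
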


The above theorem gives precise location of the outlier singular values and the extremal non-outlier singular values. For the outliers, they locate around their  classical locations $p(d_i)$ and for the non-outliers, they locate around $\lambda_+.$   The results of the singular vectors are given by the following theorem.  Denote 
\begin{equation}\label{defn_a1a2}
 a_1(d)=\frac{d^4-c^{-1}}{d^2(d^2+c^{-1})}, \ a_2(d)=\frac{d^4-c^{-1}}{d^2(d^2+1)}.
\end{equation}
\begin{thm} \label{thm_sigularvector}Under Assumption \ref{assum_whitenoise} and the assumptions of (\ref{notation_d}) and (\ref{defn1_nonoverlapping}), for all $i, j=1,2,\cdots, r,$ there exists some constant $C>0,$ with $1-N^{-D_1}$ probability, when $N$ is large enough, we have 
\begin{equation}\label{thm_left}
\left |<u_i, \mathbf{P}_{l}u_j>-\delta_{ij} \mathbf{1}(i \in A) a_1(d_i)\right | \leq N^{\epsilon_1}R(i,j,A,N), 
\end{equation}
\begin{equation}\label{thm_right}
|<v_i, \mathbf{P}_{r}v_j>-\delta_{ij} \mathbf{1}(i \in A) a_2(d_i)|\leq N^{\epsilon_1}R(i,j,A,N),
\end{equation}
where $a_1(d), a_2(d)$ are defined in (\ref{defn_a1a2}) and $R(i,j,A,N)$ is  defined as
\begin{align*}
R(i,j,A,N):= N^{-1/2}\left[\frac{\mathbf{1}(i \in A, j \in A) }{(d_i-c^{-1/4})^{1/2}+(d_j-c^{-1/4})^{1/2}} + \mathbf{1}(i \in A, j \notin A)\frac{(d_i-c^{-1/4})^{1/2}}{|d_i-d_j|}  \nonumber \right. \\
\left. + \mathbf{1}(i \notin A, j \in A)\frac{(d_j-c^{-1/4})^{1/2}}{|d_i-d_j|} \right]+N^{-1}\left[(\frac{1}{\nu_i}+\frac{\mathbf{1}(i \in A)}{|d_i-c^{-1/4}|})(\frac{1}{\nu_j}+\frac{\mathbf{1}(j \in A)}{|d_j-c^{-1/4}|})\right].
\end{align*}
Moreover, fix a small constant $\tau>0,$ for $ k^++1\leq j \leq (1-\tau)K,$ denote $\kappa^d_j:= N^{-2/3}(j \wedge (K+1-j))^{2/3}$, we have
\begin{equation}\label{thm_non_left}
|<u_i,  \tilde{u}_j>^2|\leq \frac{N^{C\epsilon_0}}{N((d_i-c^{-1/4})^2+\kappa^d_j)}, \ i=1,2, \cdots, r,
\end{equation}
and 
\begin{equation}\label{thm_non_right}
| <v_i, \tilde{v}_j>|^2\leq \frac{N^{C\epsilon_0}}{N((d_i-c^{-1/4})^2+\kappa^d_j)}, \ i=1,2,\cdots, r.
\end{equation}
Furthermore, if $c \neq 1,$ (\ref{thm_non_left}) and (\ref{thm_non_right}) hold for all $j=k^++1, \cdots, M. $
\end{thm}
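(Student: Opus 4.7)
The plan is to follow the resolvent-plus-contour-integral strategy of \cite{BGN, BKYY, KY2}, adapted to the rectangular setting via the self-adjoint linearization $\tilde H = H + \mathbf U\mathbf D\mathbf U^*$ of (\ref{linearconstruction1}). Because the singular vectors $\tilde u_k,\tilde v_k$ appear as the two blocks of the eigenvectors of $\tilde H$, each of the overlaps $\langle u_i,\mathbf P_l u_j\rangle$ and $\langle v_i,\mathbf P_r v_j\rangle$ can be rewritten as a single quadratic form $\langle \mathbf w_i,\mathbf P_A \mathbf w_j\rangle$, where $\mathbf w_i$ is either $(u_i,0)^*$ or $(0,v_i)^*$ and $\mathbf P_A$ is the spectral projection of $\tilde H$ onto the eigenvalues indexed by $A$.

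First I would encode $\mathbf P_A$ as the Cauchy contour integral
\begin{equation*}
\mathbf P_A = -\frac{1}{2\pi i}\oint_\Gamma \tilde G(z)\,dz,
\end{equation*}
where $\Gamma$ is a positively oriented contour enclosing exactly the eigenvalues $\{\pm(z\mu_k)^{1/2}\}_{k\in A}$ of $\tilde H$ and no others. Theorem \ref{thm_location} together with the non-overlapping condition (\ref{defn1_nonoverlapping}) guarantees that such a $\Gamma$ can be drawn at distance of order $\nu_i(A)$ from the remaining spectrum, and of order $(d_i - c^{-1/4})$ from the edge $\lambda_+$. Next I would apply the low-rank perturbation identity of Lemma \ref{lem_perb},
\begin{equation*}
\tilde G(z) = G(z) - G(z)\mathbf U\bigl(\mathbf D^{-1} + \mathbf U^* G(z)\mathbf U\bigr)^{-1}\mathbf U^* G(z),
\end{equation*}
so that the quadratic form reduces to a contour integral of rational expressions in the $2r\times 2r$ matrix $\mathbf U^* G(z)\mathbf U$.

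The second main step is to substitute the deterministic limit $\mathbf U^*\Pi(z)\mathbf U$ for $\mathbf U^* G(z)\mathbf U$ using the anisotropic local law (Lemma \ref{lem_anisotropic}). The resulting deterministic contour integral has simple poles of $(\mathbf D^{-1} + \mathbf U^*\Pi\mathbf U)^{-1}$ at the classical locations $p(d_k)$, and a residue calculation — structurally identical to the one leading to (\ref{intro_outlierevec}) — yields exactly the leading terms $\delta_{ij}\mathbf 1(i\in A)a_1(d_i)$ and $\delta_{ij}\mathbf 1(i\in A)a_2(d_i)$ with $a_1,a_2$ from (\ref{defn_a1a2}). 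The random error comes from two sources. The linear one from the anisotropic law, combined with the length and distance-to-spectrum of $\Gamma$, produces the three $N^{-1/2}$ pieces of $R(i,j,A,N)$: the case $i,j\in A$ gives the symmetric factor $((d_i-c^{-1/4})^{1/2}+(d_j-c^{-1/4})^{1/2})^{-1}$, while the cases where exactly one of $i,j$ lies in $A$ give a contribution in which the residue at $p(d_i)$ is balanced against the gap $|d_i-d_j|$ (equal to $\nu_i$ or $\nu_j$ by (\ref{defn_nuai})). The quadratic error from the Neumann expansion of $(\mathbf D^{-1}+\mathbf U^* G\mathbf U)^{-1}$ around $(\mathbf D^{-1}+\mathbf U^*\Pi\mathbf U)^{-1}$ produces the $N^{-1}$ tail $(\nu_i^{-1}+\mathbf 1(i\in A)/|d_i-c^{-1/4}|)(\nu_j^{-1}+\mathbf 1(j\in A)/|d_j-c^{-1/4}|)$.

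For the delocalization bounds (\ref{thm_non_left})--(\ref{thm_non_right}) I would abandon the contour because the relevant $\mu_j$ lie in the Tracy--Widom window near $\lambda_+$. Instead, starting from the spectral decomposition of $\tilde G$, one uses
\begin{equation*}
|\langle u_i,\tilde u_j\rangle|^2 \leq \eta\,\operatorname{Im}\langle u_i,\tilde G(\mu_j+i\eta)u_i\rangle,
\end{equation*}
applies Lemma \ref{lem_perb} and Lemma \ref{lem_anisotropic} to the right-hand side, and optimizes by choosing $\eta = \max(\kappa_j,(d_i-c^{-1/4})^2)$; after using Theorem \ref{thm_location} to control the denominators in the perturbation formula, this yields the stated bound $N^{C\epsilon_0}/(N((d_i-c^{-1/4})^2+\kappa_j))$, and the identical argument on the lower block handles the right singular vectors.

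The hardest step will be the bookkeeping of the three geometric regimes in $R(i,j,A,N)$: when exactly one of $i,j$ lies in $A$ the contour encloses one classical location but not the other, and the cancellation of the deterministic part forces a careful, uniform-in-$A$ estimate balancing residue size against the gap $|d_i-d_j|$. Showing that this balance is compatible with the non-overlapping condition (\ref{defn1_nonoverlapping}) and that no factor of $\log N$ is lost when combining the anisotropic error on $\Gamma$ with the Neumann expansion — especially near the transition threshold $d_i \approx c^{-1/4}$, where both $(d_i-c^{-1/4})^{-1/2}$ and $\nu_i^{-1}$ blow up — is where the main technical effort lies.
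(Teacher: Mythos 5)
Your plan for the outlier overlaps (\ref{thm_left})--(\ref{thm_right}) is essentially the paper's own argument: represent the projection as a Cauchy integral of $\tilde G$, reduce to the $2r\times 2r$ matrix $\mathbf U^*G(z)\mathbf U$ via the resolvent identity (which, as a small correction, is Lemma \ref{lem_gtitle}, not Lemma \ref{lem_perb} -- the latter is only the determinant criterion for eigenvalues), replace $\mathbf U^*G\mathbf U$ by $\mathbf U^*\Pi\mathbf U$ through the anisotropic law, and extract the limit $a_1(d_i),a_2(d_i)$ by residues while the linear and quadratic terms of the expansion give the $N^{-1/2}$ and $N^{-1}$ pieces of $R(i,j,A,N)$. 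This is exactly the paper's decomposition into $S^{(0)},S^{(1)},S^{(2)}$ over the contour $\Gamma=\partial\bigcup_{k\in A}B_{\rho_k}(d_k)$ with $\rho_k=\tfrac12\bigl(\nu_k\wedge(d_k-c^{-1/4})\bigr)$, and your description of how the three regimes of $R$ arise is consistent with it.

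The genuine gap is in the non-outlier part. The inequality $|\langle v_i,\tilde v_j\rangle|^2\le \eta\,\operatorname{Im}\langle v_i,\tilde{\mathcal G}_2(\mu_j+i\eta)v_i\rangle$ gets \emph{weaker} as $\eta$ grows, and the factor $1/N$ in (\ref{thm_non_left})--(\ref{thm_non_right}) can only come from making $\eta\,\operatorname{Im}m_{2c}(\mu_j+i\eta)$ of size $N^{-1+o(1)}$. With your choice $\eta=\max\bigl(\kappa_j,(d_i-c^{-1/4})^2\bigr)$ this product is of order one whenever $\kappa_j\sim 1$ (i.e. $\mu_j$ in the bulk, which the theorem covers since $j$ runs up to $K$) or $d_i-c^{-1/4}\sim 1$, so the argument only yields an $O(1)$ bound instead of the claimed $N^{C\epsilon_0}/\bigl(N((d_i-c^{-1/4})^2+\kappa_j)\bigr)$, i.e. complete delocalization is lost. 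In addition, your $\eta$ does not guarantee that the anisotropic error $\Delta(z)=O\bigl(N^{\epsilon_1}/(N\eta)\bigr)$ is dominated by $\min_k|d_k^{-2}-\mathcal T(z)|\ge \operatorname{Im}\mathcal T(z)\sim\operatorname{Im}m_{2c}(z)$, which is what justifies the resolvent expansion of $(\mathbf D^{-1}+\mathbf U^*G\mathbf U)^{-1}$; near the threshold, e.g. $\kappa_j\sim(d_i-c^{-1/4})^2\sim N^{-2/3}$, both sides are of order $N^{-1/3}$ and the expansion is not under control. The paper resolves both issues at once by taking $\eta$ to be the smallest solution of $\operatorname{Im}m_{2c}(\mu_j+i\eta)=N^{-1+6\epsilon_1}\eta^{-1}$ (so that $\eta\,\operatorname{Im}m_{2c}=N^{-1+6\epsilon_1}$ and $\operatorname{Im}\mathcal T\gg|\Delta|$), and then bounds the denominator via $|1-d_i^2\mathcal T(z)|^2\gtrsim d_i^2\bigl(|d_i^{-2}-c^{1/2}|+\operatorname{Im}\mathcal T(z)\bigr)$ using the elementary inequality (\ref{basicinequality}) and $\operatorname{Im}\mathcal T(z)\gtrsim\sqrt{\kappa_j}$; without this (or an equivalent) choice of $\eta$, the stated bound does not follow.
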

\begin{rem} The assumption $ j \leq (1-\tau)K$ ensures that $\mu_j \geq \delta,$ for some constant $\delta>0$.  When $c \neq 1,$ it is guaranteed as we will see  from Lemma \ref{lem_rigidity} that $\mu_j \geq (1-c^{-1/2})^2/2. $ We need $\mu_j \geq \delta$ for the technical purpose of the application of the local laws. 
\end{rem}

Next we will give some examples to illustrate our results. We assume that $c \neq 1.$
\begin{exam}
 (1). Consider the right singular vectors and let $A=\{i\}$, we have
\begin{equation*}
|<v_i, \tilde{v}_i>^2-a_2(d_i)| \leq N^{\epsilon_1}\left[ \frac{1}{N^{1/2} (d_i-c^{-1/4})^{1/2}}+\frac{1}{N\nu^2_i(d_i-c^{-1/4})^2}\right].
\end{equation*}
This implies that, the cone concentration of the singular vector holds if $i \in \mathcal{O}$ and the non-overlapping condition (\ref{defn1_nonoverlapping}) holds. Furthermore,  if $d_i$ is well-separated from both  the critical point $c^{-1/4}$ and  the other outliers, the error bound is of order $\frac{1}{\sqrt{N}}.$ \\
(2).  Let $A=\{i\}$ and for  $1 \leq j \neq i \leq r$,  we have 
\begin{equation*}
|<v_j, \tilde{v}_i>^2| \leq \frac{N^{\epsilon_1}}{N(d_i-d_j)^2}.
\end{equation*}
Hence, if $|d_i-d_j|=O(1),$ then $\tilde{v}_i$ will be completely delocalized in any direction orthogonal to $v_i.$ \\
\noindent (3). If $i \in \mathcal{O},\ j \notin \mathcal{O},$ then we have
\begin{equation*}
|<v_i,\tilde{u}_j>^2| \leq \frac{N^{C\epsilon_0}}{N((d_i-c^{-1/4})^2+\kappa^d_j)}.
\end{equation*}
Hence, when $|d_i-c^{-1/4}|=O(1)$ or $\kappa^d_j=O(1),$ $\tilde{u}_j$ will be completely delocalized in the direction of $v_i.$ The first case reads as $\mu_i$ is an outlier and the second case as that $\mu_j$ is in the bulk of the spectrum of $\tilde{S}\tilde{S}^*.$
\end{exam}
 Before concluding this section, we use the following figure to illustrate the accuracy of the proposed bounds in (\ref{thm_location_outlier}), (\ref{thm_left}) and (\ref{thm_right}). We consider the rank one perturbation $\tilde{S}=duv^* +X,$ where $X$ is a Gaussian random matrix with mean zero and variance $1/N$ and $u,v$ are sparse vectors generated from the $R$ package $\mathit{R1magic}$. 

To avoid the influence of the constant, we consider the ratio between the empirical bound and dominated part, i.e.,  for $d>c^{-1/4},$ we will consider 
\begin{equation*}
R_1=\Phi_1 |\mu_1-p(d)|, \ R_2=\Phi_2 |\langle u, \tilde{u}_1 \rangle^2 -a_1(d) |, \  R_3=\Phi_2|\langle v, \tilde{v}_1 \rangle^2 -a_2(d) |,  
\end{equation*}
where $\Phi_1:=\sqrt{N} (d-c^{-1/4})^{-1/2}$ and $\Phi_2:=\sqrt{N (d-c^{-1/4})}.$ We consider the cases $c=0.5$ and $c=2, $ and choose $d=2.$ 
For each $N,$ we record the averaged ratios for $R_i,i=1,2,3,$ using 1,000 repetitions and plot these ratios for a variety of choices (in total 181) of $N$ between $200$ and $2000.$ We can conclude from Figure \ref{bound1} that these ratios are around some fixed constants independent of $N.$  

\begin{figure}[htp]
\begin{minipage}{.45\textwidth}
  \includegraphics[width=1.05\linewidth]{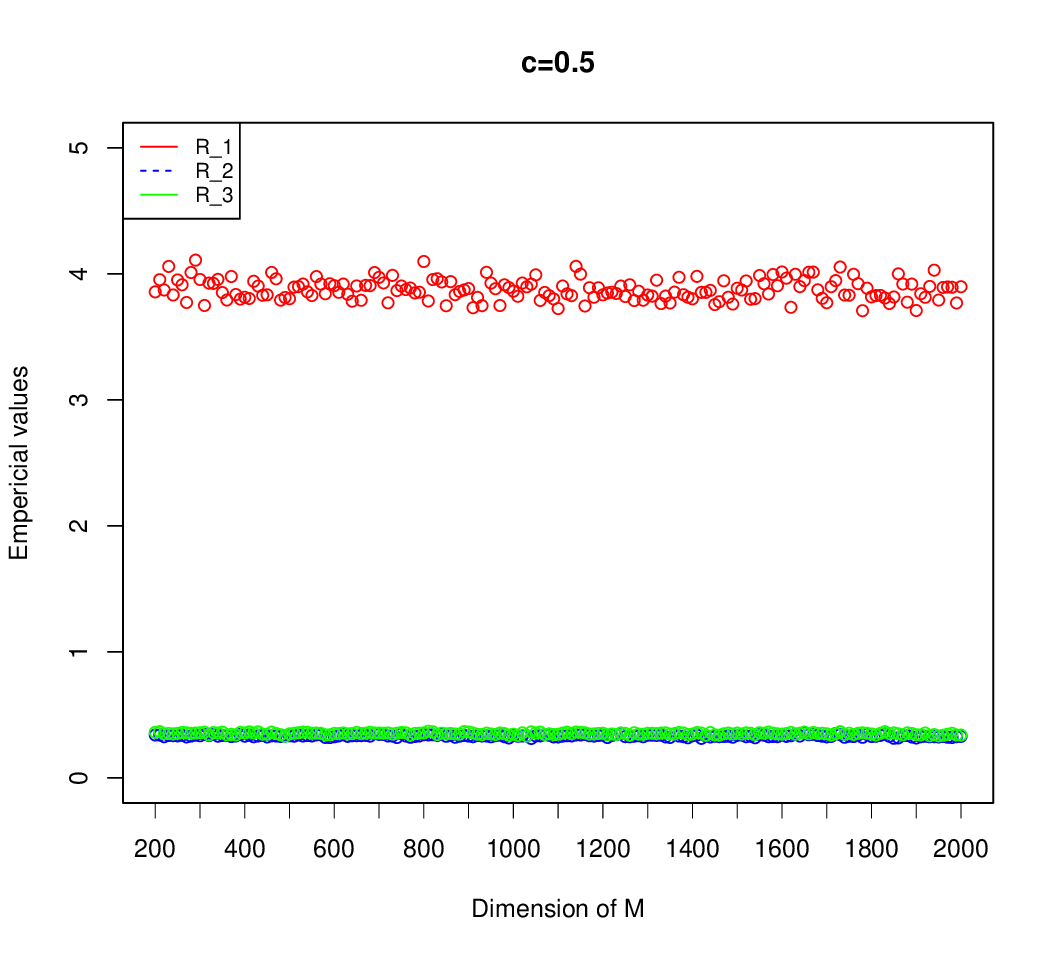} 
\end{minipage}%
\begin{minipage}{.45\textwidth}
  \includegraphics[width=1.05\linewidth]{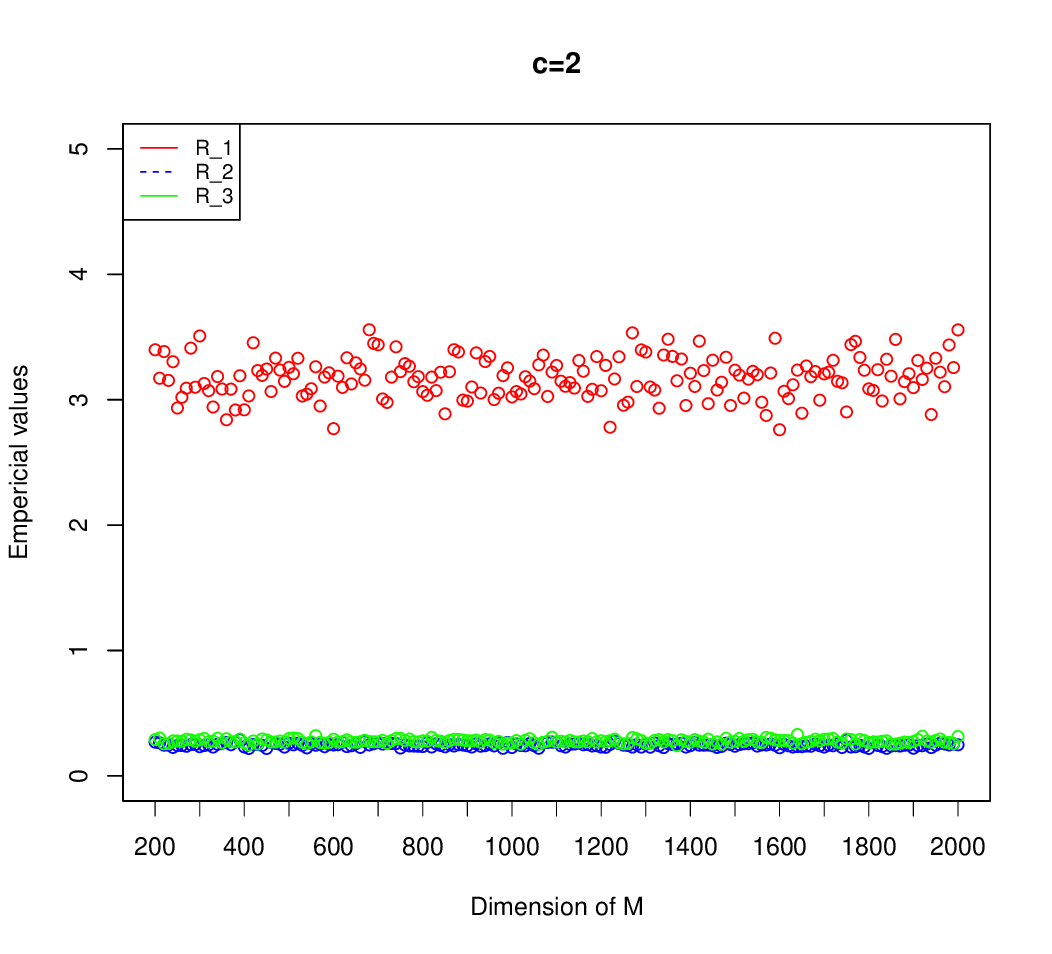} 
\end{minipage}
\caption{ We can see from the above figure that $R_1, R_2, R_3$ are independent of $N$. Further,  the left and right singular vectors have the same bounds.}
 \label{bound1}
\end{figure}

\section{Statistical applications }\label{section_statistics}

\subsection{Sparse estimation}  In the present application, we study the denoising problem (\ref{defn_m1}), where $S$ is sparse in the sense that the nonzero entries are assumed to be confined on a block. We assume that $u_i, \ v_i$ are sparse and introduce the following definition to precisely describe the sparsity.
\begin{defn} \label{defn_sparse}  For any vector $\nu \in \mathbb{R}^N$, $\nu$ is a sparse vector  if  there exists a subset $\mathbb{N}^{*} \subset \{1,2,\cdots, N \}$ with $|N^*|=O(1),$ such that 
\begin{equation*}
|\nu(i)|=
\begin{cases}
O(1), &  i \in \mathbb{N}^* ;\\
O(N^{-1/2}), & \text{otherwise}.
\end{cases}
\end{equation*} 
\end{defn}
Next we will propose an estimator for $S$ by estimating the singular values and vectors separately.  As can been see from Theorem \ref{thm_location}, we can estimate the true outlier singular values from their corresponding sample values.  To ease our discussion,  we impose the following stronger assumptions on the outlier singular values of $S.$
\begin{assm}\label{assum_statsingular}  For  $i,j=1,2, \cdots, k^{+},$ we assume that there exists some constant $\delta>0,$ such that $$d_i>c^{-1/4}+\delta, \ |d_i-d_j| \geq \delta, \ i \neq j. $$
\end{assm}
Note that the above assumption is a stronger version of (\ref{defn_mathcalo}) and widely used in the practical applications \cite{GD1, RRN2014, NE, PYao}. We first estimate the number $k^+$ of outlier singular values. In \cite{NE}, $k^+$ is referred as the \textit{effective number of identifiable signals} and the author provided an information theoretic estimator by minimizing the Akaike Information Criterion (AIC). Furthermore, some other
useful statistics have been proposed to effectively estimate the number of spikes in the spiked covariance matrix model, for instance the differences between consecutive eigenvalues in \cite{PYao}. By Theorem \ref{thm_location}, when $i \leq k^+,$ we expect $\mu_i/\mu_{i+1}$ will be away from one and when $i>k^+,$ it will be close to one.  In the present paper, we will employ the ratios of consecutive sample singular values \cite{LY} as our statistic. For $\tau:=O(N^{-\alpha})$ satisfying
\begin{equation}\label{eq_alphaconsi}
0<\alpha<\frac{2}{3}, 
\end{equation}
we denote (Recall $K=\min\{M,N\}$.)
\begin{equation} \label{defn_q}
q=\argmaxB_{i} \Big \{1 \leq i \leq K:  \mathcal{R}_i >1+\tau \Big \}, \ \tau>0, \ \mathcal{R}_i=\frac{\mu_i}{\mu_{i+1}}.
\end{equation}
We summarize the property of $q$ as the following proposition and its proof can be found in the supplementary material \cite{DBS}. 
\begin{prop}\label{prop_choiseoutlier} Under the assumptions of Theorem \ref{thm_location}  and Assumption \ref{assum_statsingular}, for some $\tau=O(N^{-\alpha})$ satisfying (\ref{eq_alphaconsi}), we have that
\begin{equation*}
\mathbb{P}(q=k^{+})=1-o(1).
\end{equation*}
\end{prop}

In practice, for the choice of $\tau$, we employ the automatic calibration procedure of \cite[Section 4]{PYao}. The idea is to use the ratio of the first two largest eigenvalues of a Wishart matrix, i.e., an $M \times N$ random Gaussian matrix satisfying Assumption \ref{assum_whitenoise}. Indeed, we need to search the eigenvalue index such that the ratio of two consecutive eigenvalues of $\tilde{S} \tilde{S}^*$ is much larger than $1+\tau$ corresponding to that of $XX^*$. In detail, we will use the following procedure to calibrate $\tau.$ 
\begin{enumerate}
\item[(1).] Generate a sequence (say 1,000) of   $M \times N$ random Gaussian matrices $Z_k, k=1,2,\cdots, 1000$ satisfying Assumption \ref{assum_whitenoise}. Calculate the ratios of the first and second eigenvalue of $Z_k Z_k^*$ and write them as $\mathcal{R}_{1,k},k=1,2,\cdots,1,000.$
\item[(2).] For a given large probability $\beta,$(say $\beta=0.98$ as suggested by \cite{PYao}), find the value $\tau$ such that
\begin{equation*}
\frac{\#\{k: \mathcal{R}_{1,k}-1 \leq \tau\}}{1000}=\beta.
\end{equation*}  
\end{enumerate} 
For $c=2, $ we find that $\tau=0.0577$ for $M=300$ and $\tau=0.0372$ for $M=500.$ These will be used later for our simulation studies.



With the above notations, we provide the  \emph{stepwise SVD} {\bf Algorithm} \ref{CHalgorithm}  to recover $S$ in (\ref{defn_m1}). As $u_i, v_i$ are sparse,  we need to find a submatrix of $\tilde{S}$ by a suitable truncation. Instead of simply truncating the singular values \cite{GD1, YMB},  we truncate the singular values and vectors simultaneously.

\begin{algorithm}
\caption{Stepwise SVD}
\label{CHalgorithm}
\begin{algorithmic}[1]
\State Do SVD for $\tilde{S}=\sum_{i=1}^{K} \mu_i \tilde{u}_i \tilde{v}_i^*,$ and do the initialization $\tilde{S}_1=\tilde{S}=\sum t^1_i \tilde{u}_i^1 (\tilde{v}_i^1)^*.$
\While{$ 1 \leq
 j \leq q$ } 
\State $\hat{d}_j=p^{-1}((t^j_1)^2),$ where $p^{-1}(x)$ is the inverse of the function defined in (\ref{defn_pd}).
\State Use two thresholds $\alpha_{u_j} \gg \frac{1}{\sqrt{M}}, \ \alpha_{v_j} \gg \frac{1}{\sqrt{N}},$ and denote
\begin{equation}\label{defn_ij}
 I_j:=\{ 1 \leq k \leq M: |\tilde{u}^j_1(k)|\geq \alpha_{u_j} \}, \ J_j:=\{ 1 \leq k \leq N: |\tilde{v}^j_1(k)|\geq \alpha_{v_j} \}.
\end{equation}
\State Do SVD for the block matrix $\tilde{S}_{b}=\tilde{S}_j[I_j, J_j]=\sum \rho_i u^j_i (v^j_i)^*.$ 
\State Assume $I_j=\{k_1, \cdots, k_j\},$ construct
$\hat{u}_j$ by letting
\begin{equation*} 
\hat{\mu}_j(k_j)=
\begin{cases}
\mu_1^j(j), &  k_j \in I_j, \\
0, & \text{otherwise}.
\end{cases}
\end{equation*}
Similarly, we can construct $\hat{v}_j. $
\State Let $\tilde{S}_{j+1}=\tilde{S}_{j}-\hat{d}_j \hat{u}_j \hat{v}_j^*$ and do SVD for  $\tilde{S}_{j+1}=\sum t_i^{j+1} \tilde{u}^{j+1}_i(\tilde{v}^{j+1}_i)^*.$
\EndWhile
\State Denote  $\hat{S}=\sum_{k=1}^{q} \hat{d}_k \hat{u}_k \hat{v}_k^*$ as our estimator. 
\end{algorithmic}
\end{algorithm}

{\bf Algorithm} \ref{CHalgorithm} provides us a way to recover $S$ stepwisely. We first estimate $d_1,  u_1, v_1$ using the estimation $\hat{d}_1, \hat{u}_1, \hat{v}_1,$  then $d_2, u_2, v_2$ by analyzing $\tilde{S}-\hat{d}_1 \hat{u}_1 \hat{v}_1^*.$ In each step, we only need to look at the largest singular value and its associated singular vectors.  It is notable that, we drop all  the eigenvalues $\mu_i$ of $\tilde{S} \tilde{S}^*$ when $i<q$ and
\begin{equation}\label{sparseestimationshrinakge}
\hat{d}_i=\mathbf{1}(i \geq q)p^{-1}(\mu_i).
\end{equation}


Our methodology relies on truncating singular values and vectors simultaneously. As illustrated in (\ref{defn_ij}), the thresholds $\alpha_u$ and $\alpha_v$ play the key roles in recovering the sparse structure of the singular vectors. It will be proved in Section \ref{section_results} that any threshold satisfying (\ref{defn_ij}) should work when $N$ is sufficiently large. In the finite sample framework (when $N$ is not quite large), we  employ the K-means  algorithm \cite[Section 10.3.1]{JWHT} to stabilize  the recovery of  the sparse structure of $S$.  The reason behind is, the entries in the singular vectors $\tilde{u}_i, \tilde{v}_i$ can be well classified into two categories.  Denote the index sets $C_{u}^j, C_{v}^j$ getting from the K-means algorithm, where they satisfy 
\begin{equation} \label{boundskmeans}
\min_{k \in C_{u}^j} |\tilde{u}_1^j(k)| \gg \frac{1}{\sqrt{M}},  \  \min_{k \in C_{v}^j} |\tilde{v}_1^j(k)| \gg \frac{1}{\sqrt{N}}.
\end{equation}
We now replace (\ref{defn_ij}) with the following step:
\begin{itemize}
\item  Do K-means clustering to partition the entries of $\tilde{u}_1^j, \ \tilde{v}_1^j$ into two classes, where 
\begin{equation}\label{defn_ij2}
 I_j:=\{ 1 \leq k \leq M: k \in C^j_{u} \}, \ J_j:=\{ 1 \leq k \leq N: k \in C^j_{v} \},
\end{equation} 
where $C_{u}^j, C_{v}^j$ satisfy (\ref{boundskmeans}).
\end{itemize}

Next, we summarize the theoretical properties of {\bf Algorithm} \ref{CHalgorithm} as the following theorem and leave its proof into the supplementary material \cite{DBS}. 
\begin{thm} \label{thm_sparse}
With prior information that $u_i, v_i$ are sparse in the sense of Definition \ref{defn_sparse}, under the assumptions of Theorem \ref{thm_location} and \ref{thm_sigularvector},
and Assumption \ref{assum_statsingular}, there exists some $C>0,$  with $1-o(1)$ probability, for the estimator $\hat{S}$ getting from {\bf Algorithm \ref{CHalgorithm}}, we have 
\begin{equation*}
||\hat{S}-S ||_F \leq N^{-1/2+C\epsilon_0}+\sqrt{\sum_{i=k^++1}^r d_i^2}.
\end{equation*}
\end{thm}

Before concluding this subsection, we compare our method with other different algorithms.
In \cite{YMB}, the authors proposed another algorithm from a quite different perspective. They did not take the properties of the singular values and vectors of $\tilde{S}$ into consideration.  Instead, they used iterative thresholding on the rows of  $\tilde{S}$ to get an estimator. The algorithm is called \emph{sparse SVD}. Their algorithm can be regarded as the extension of TSVD on the submatrix of $\tilde{S}$. 

We use Table \ref{my-label} to compare the results of three algorithms, our stepwise SVD(SWSVD), the sparse SVD(SSVD) proposed by \cite{YMB} and the truncated SVD(TSVD).  For the implementation of  SSVD, we use the $\mathit{ssvd}$ package in R which is contributed by the first author of \cite{YMB}. From Table \ref{my-label}, we find that our method outperforms both the SSVD and TSVD in all the cases . Furthermore, the standard deviation is small, which implies that our estimation is quite stable.

\begin{table}[ht]
\begin{tabular}{llllllll}
\toprule[1.5pt]
      & M=300    &         &     &  & M=500   &         &   \\   
      & Sparsity & $L^2$ error norm & Std &  & Sparsity & $L^2$ error norm & Std \\ \cmidrule[1.5pt](lr){1-4}\cmidrule[1.5pt](l){5-8}
SWSVD & 0.05     & {\bf 0.043}       &  0.175     &  &    0.05      & {\bf 0.045}        & 0.189    \\
      & 0.1      & {\bf 0.614}         & 0.178    &  &  0.1 & {\bf 0.6}        & 0.16     \\
      & 0.2      & {\bf 0.822}         & 0.126    &  &  0.2        & {\bf 0.825}        & 0.137    \\
    & 0.45      &  {\bf 1.1}       &   0.114  &  & 0.45          &{\bf 1.09}         & 0.09         \\  \cmidrule[1.5pt](lr){1-4}\cmidrule[1.5pt](l){5-8}
SSVD  & 0.05     & 4.01       & 0.002   &  & 0.05          &   4.01      &  0.002    \\
      & 0.1      &  4.01       & 0.004     &  &  0.1        &  4.02       & 0.002     \\
       & 0.2      &  4.04       & 0.004     &  &  0.2        &  4.03        & 0.004    \\
     & 0.45     &    4.06    &  0.005     &  &0.45          & 4.08        &  0.004         \\
 \cmidrule[1.5pt](lr){1-4}\cmidrule[1.5pt](l){5-8}
TSVD  & 0.05     &  53.9    & 6.872   &  &     0.05     &    53.75    & 6.63     \\  
      & 0.1      & 53.72        & 6.63   &  & 0.1        &        53.38 & 6.71    \\
      & 0.2      & 52.33        & 7.01   &  & 0.2         &        52.2 & 6.65    \\
      & 0.45     & 51.043        &  2.49   &  &0.45          & 52.4         & 4.3  \\
\bottomrule[1.5pt]
\end{tabular}
\vspace{3.5pt}
\caption{Comparison of  the algorithms. We choose $r=2, c=2, d_1=7, d_2=4$ in (\ref{defn_m2}). The noise matrix $X$ is Gaussian. In the table, sparsity is defined as the ratio of non-zero entries and length of the vector and we assume that $u_i, v_i, i=1,2$ have the same sparsity. We highlight the smallest error norm. }
\label{my-label}
\end{table}

\subsection{ Rotation invariant estimation} \label{RIE} This subsection is devoted to recovering $S$ in (\ref{defn_m1}) assuming that no prior information about $S$ is available. In this regime, we will consider the rotation invariant estimator (RIE) satisfying (\ref{defn_rotationinvarint}). We conclude from \cite{BABP} that any RIE  shares the same singular vectors as $\tilde{S}.$ To construct the optimal estimator, we use the Frobenius norm as our loss function. Denote $ \hat{S}=\Xi (\tilde{S}), $ we  have  


%
%
\begin{equation}\label{defn_norm}
||S-\hat{S}||_F^2=\operatorname{Tr}(S-\hat{S})(S-\hat{S})^*.
\end{equation}
Therefore, the form of the  RIE can be written in the following way
\begin{equation} \label{defn_minimizeproblem}
\hat{S}=\argminB_{H \in \mathcal{M}(\tilde{U}, \tilde{V})} ||H-S||_F,
\end{equation}
where $\mathcal{M}(\tilde{U}, \tilde{V})$ is the class of $M \times N$ matrices whose left singular vectors are $\tilde{U}$ and right singular vectors are $\tilde{V}.$ Suppose $\hat{S}= \sum_{i=1}^K \eta_k \tilde{u}_k \tilde{v}_k^*, $
denote $\mu_{k_1 k}=<u_{k_1},\tilde{u}_k>, \  \nu_{k_1 k}=<v_{k_1}, \tilde{v}_k>,$ then by an elementary computation, we find
\begin{align} \label{rie_errordecomposition}
||S-\hat{S}||_F^2= & \sum_{k=1}^{r} (d_k^2+\eta_k^2) -2\sum_{k=1}^r d_k \eta_k \mu_{kk} \nu_{kk} \nonumber\\
+ & \sum_{k=r+1}^{K} \eta_k^2-2\sum_{k_1 \neq k_2}^r d_{k_1} \eta_{k_2} \mu_{k_1 k_2} \nu_{k_1 k_2}-2\sum_{k_1=r+1}^K \sum_{k_2=1}^{r} \eta_{k_1} d_{k_2} \mu_{k_2 k_1} \nu_{k_2 k_1}.
\end{align}
Therefore,  $\hat{S}$ is optimal if
\begin{equation} \label{defn_rie2}
\ \eta_k=<\tilde{u}_k, S\tilde{v}_k>=\sum_{k_1=1}^{r} d_{k_1} \mu_{k_1 k}\nu_{k_1 k},  \ k=1,\cdots, K.
\end{equation}
In the present paper, we use the following estimator for $\eta_k$ and will prove its consistency in Section \ref{section_results}. Recall (\ref{defn_q}), the estimator is denoted as
\begin{equation}\label{defn_rieestimator}
\hat{\eta}_k = 
\begin{cases}
\hat{d}_k a_1(\hat{d}_k) a_2(\hat{d}_k), &   k \leq q; \\
0, &  k > q.
\end{cases}, 
\end{equation}
where $\hat{d}_k=p^{-1}(\mu_k)$ and $a_1(x), a_2(x)$ are defined in (\ref{defn_a1a2}).
Denote 
\begin{equation} \label{eq_hatcals}
\hat{\mathcal{S}}=\sum_{k=1}^q  \hat{\eta}_k \tilde{u}_k \tilde{v}_k^*,
\end{equation}
It is notable that the convergent limits for the shrinkage $\hat{\eta}_k$ and MSE for $\hat{\mathcal{S}}$ have already been computed in \cite{RRN2014}. We next summarize the theoretical properties of our estimators as the following theorem.  Its proof can be found in the supplementary material \cite{DBS}. 
\begin{thm} \label{thm_rie}
(1).  Under the assumptions of Theorem \ref{thm_location} and \ref{thm_sigularvector},  there exists some large constant $C>0$ and small constant $\tau>0,$ with $1-o(1)$ probability,  we have $\hat{\eta}_k \rightarrow \eta_k, \ k=1,2,\cdots, K.$ Furthermore, for $ 1 \leq k \leq (1-\tau)K,$ we have
\begin{equation}\label{consiste_rie}
|\hat{\eta}_k-\eta_k| \leq \mathbf{1}(k \leq k^+)N^{-1/2+C\epsilon_0}+\mathbf{1}(k>k^+)N^{-1+C\epsilon_0}.
\end{equation}
Moreover, when $c\neq 1, $ (\ref{consiste_rie}) holds for all  $k=1,\cdots,K.$ 
(2). When $c \neq 1,$ there exists some constant $C>0,$ with $1-o(1)$ probability, for $\hat{\mathcal{S}}$ defined in (\ref{eq_hatcals}), we have 
\begin{equation*}
|| \hat{\mathcal{S}}-S ||_F^2 \leq \sum_{i=1}^r d_i^2-\sum_{i=1}^{k+} \Big(d_i a_1(d_i)a_2(d_i) \Big)^2 +N^{-1/2+C\epsilon_0}.
\end{equation*}
\end{thm}

Figure \ref{fig3} are two examples of the estimations of $\eta_k$. From the graph, we find that our estimator $\hat{\eta}_k$ is quite accurate. Figure \ref{fig4} records the relative improvement in average loss (RIAL) compared to TSVD, where the RIAL is defined as
\begin{equation} \label{defn_rial}
\text{RIAL}(N)=1-\frac{\mathbb{E}||\hat{\mathcal{S}}-S||_F}{\mathbb{E}||S_T-S||_F},
\end{equation}
and where $S_T$ is the TSVD estimation and $\hat{\mathcal{S}}$  the RIE. We conclude from the figure that our method provides better estimation compared to the TSVD. Similar results have been shown for the estimation of covariance matrices by Ledoit and P{\' e}ch{\' e} in \cite{LP}.

\begin{figure}[htp]
\begin{minipage}{.45\textwidth}
  \includegraphics[width=1.05\linewidth]{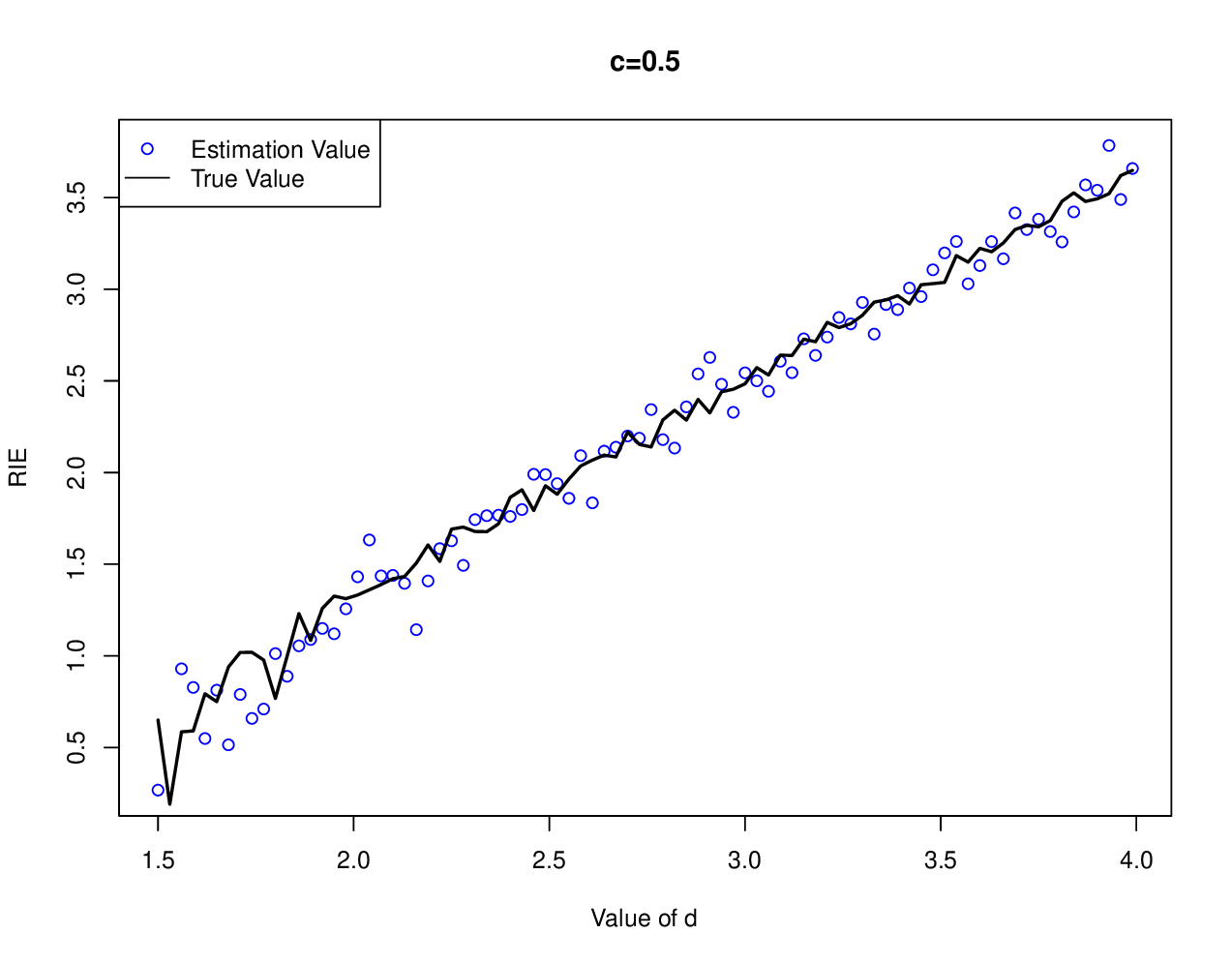} 
\end{minipage}%
\begin{minipage}{.45\textwidth}
  \includegraphics[width=1.05\linewidth]{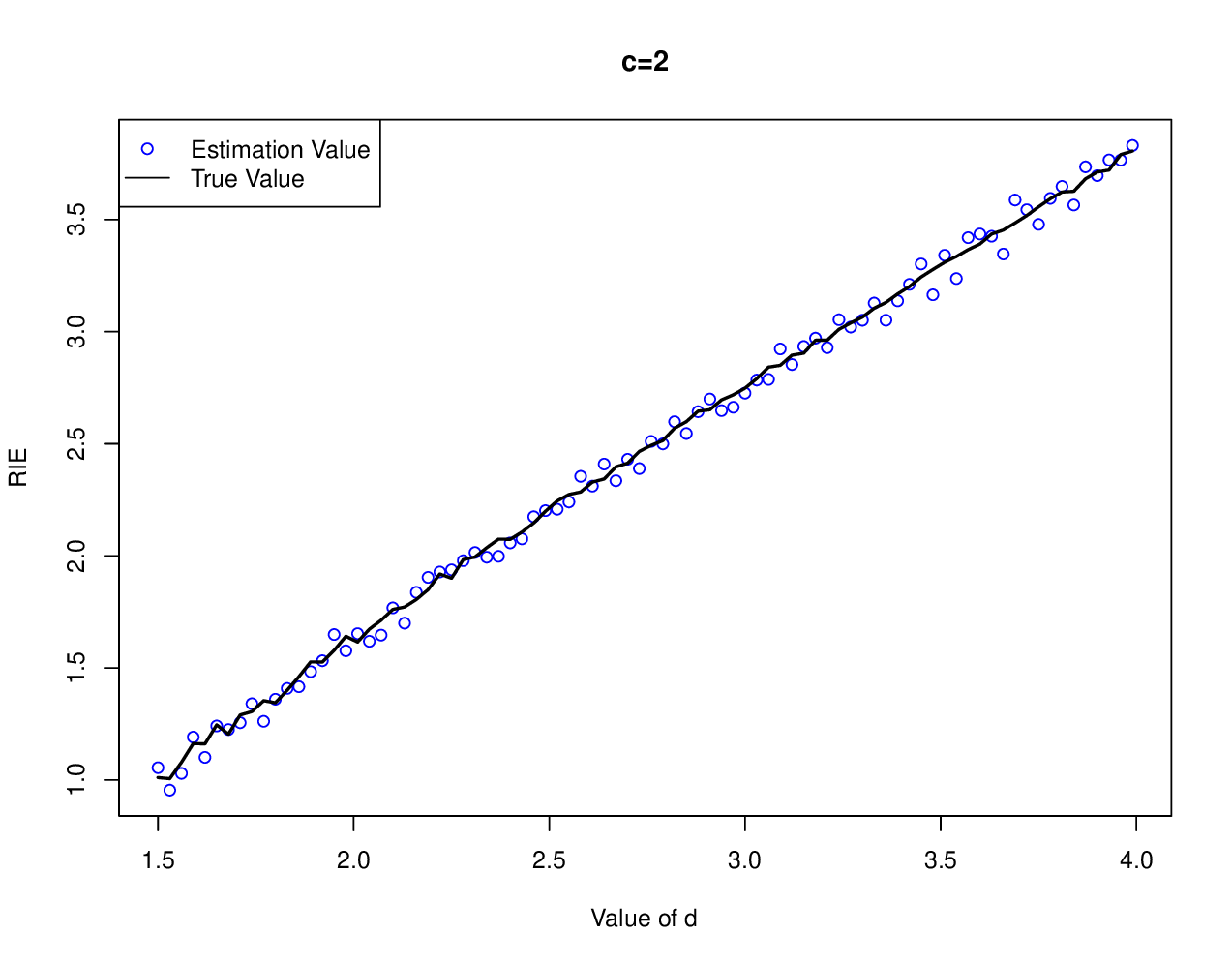} 
\end{minipage}
\caption{ RIE. We choose $r=1$ and $M=300$ for (\ref{defn_m2}). We estimate $\eta_1$ using the estimator (\ref{defn_rieestimator}) for  $c=0.5, 2$  with different values of $d$. The entries of $X$ are Gaussian random variables and the singular vectors satisfy the exponential distribution with rate 1.}
 \label{fig3}
\end{figure}
\begin{figure}[!htb]
\centering
\includegraphics[height=5cm,width=10cm]{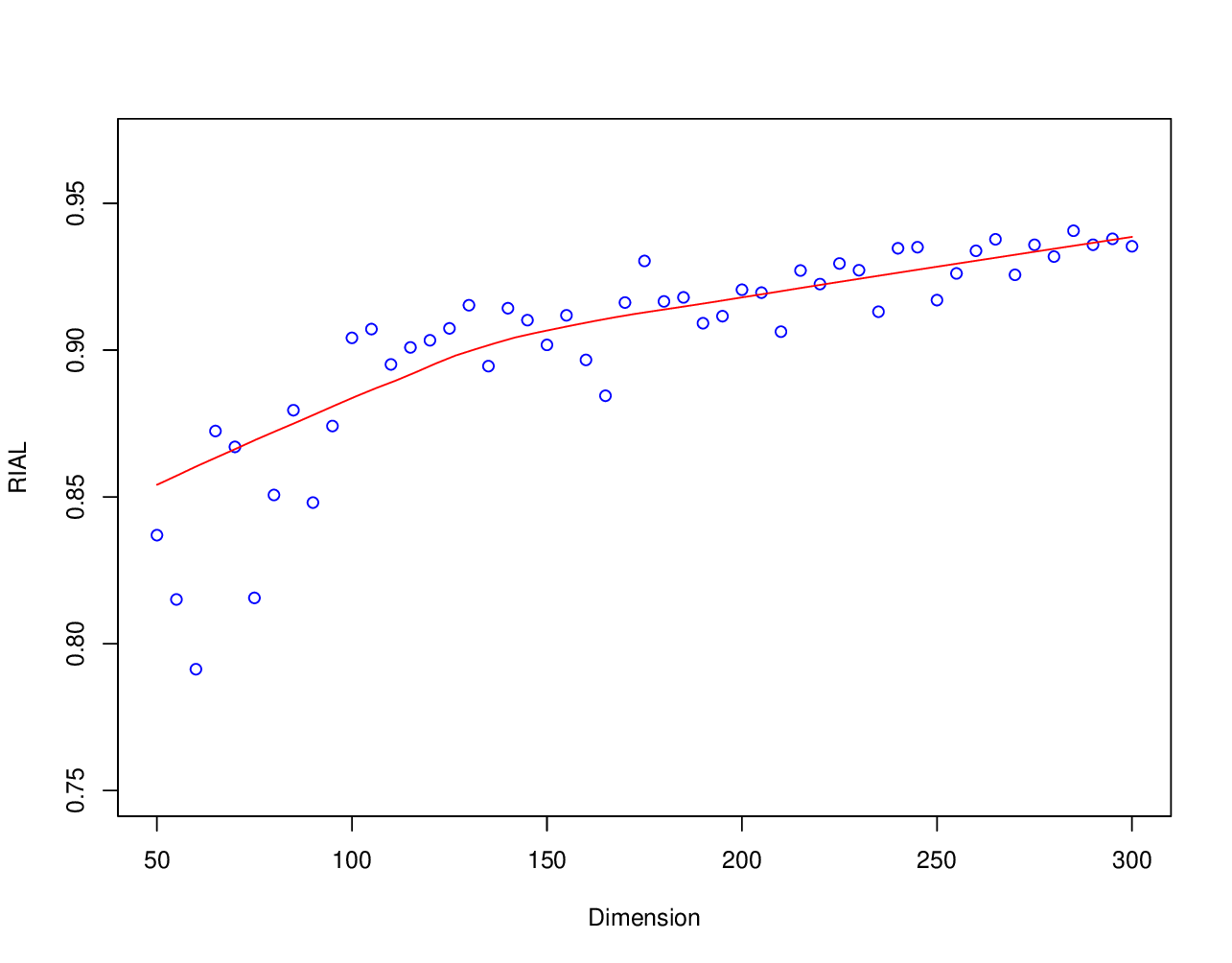}
\caption{ RIE compared to TSVD. We choose $r=1, d=4, c=2$ in (\ref{defn_m1}). X is a random Gaussian matrix and the entries of the singular vectors satisfy the exponential distribution with rate 1.  We perform 1000 Monte-Carlo simulations for each $M$ to simulate the RIAL defined in (\ref{defn_rial}). The red line indicates the increasing trend as $M$ increases. }
\label{fig4}
\end{figure}
\begin{rem} In \cite{GD1}, Donoho and Gavish get similar results from the perspective of optimal shrinkage. However, they need two more assumptions: (1). they drop the last two error terms in (\ref{rie_errordecomposition}) by assuming they are small enough (see Lemma 4 in their paper); (2). their estimators are assumed to be conservative, where  they assume the shrinker vanishes when the sample singular values are below $\lambda_+$  defined in (\ref{notation_edges}), i.e.,  for some constant $\gamma>0,$ 
\begin{equation*}
\eta_k=0, \ \text{when} \ \mu_k  \leq \lambda_++\gamma.
\end{equation*} 
However,  we find that the estimator defined in (\ref{defn_rieestimator}) can still be consistent even without these assumptions. 
\end{rem}

\section{Basic tools}\label{section_tool}

In this section, we introduce some notations and  tools which will be used in this paper. Recall that the empirical spectral distribution (ESD) of an $n \times n$ symmetric matrix $H$ is defined as
\begin{equation*}
F^{(n)}_H(\lambda):=\frac{1}{n} \sum_{i=1}^n \mathbf{1}_{\{\lambda_i(H) \leq \lambda\}}.
\end{equation*}
\noindent  We define the typical domain for $z=E+i\eta$ by
\begin{equation} 
\mathbf{D}(\tau)\equiv \mathbf{D}(\tau, N):= \{ z \in \mathbb{C}^{+}: \tau \leq E  \leq \tau^{-1}, \ N^{-1+\tau} \leq \eta \leq \tau^{-1}  \} \label{DOMAIN1},
\end{equation}
where $\tau>0$ is a small constant. Recall (\ref{notation_d}), we assume that $\tau<c_N <\tau^{-1}.$ 

\begin{defn} The Stieltjes transform of the ESD of $X^{*}X$ is given by
\begin{equation*}
m_2(z)\equiv m_2^{(N)}(z):=\int \frac{1}{x-z}dF^{(N)}_{X^*X}(x)=\frac{1}{N}\sum_{i=1}^N (\mathcal G_2)_{ii}(z)=\frac{1}{N} \mathrm{Tr} \, \mathcal G_2(z), \label{EEEE}
\end{equation*}
where $\mathcal{G}_2(z)$ is defined in (\ref{def_green}). Similarly, we can also define $m_1(z):= M^{-1}\mathrm{Tr} \, \mathcal G_1(z)$.
\end{defn}
Denote $m_{1c}(z):=\lim_{N \rightarrow \infty}m_1(z), \ m_{2c}(z):=\lim_{N \rightarrow \infty}m_2(z) $
be the Stieltjes transforms of limiting spectral distributions of $m_1(z), m_2(z)$. Using the identity $m_1 (z)= - \frac{1-c_N}{z}+c_N m_2 (z),$ we have 
\begin{equation} \label{relation_m1cm2c}
m_{1c}(z)=\frac{c-1}{z}+cm_{2c}(z).
\end{equation}

 \begin{defn} For $X$ satisfying (\ref{defn_xij}), under the assumption (\ref{notation_d}), the ESD of $XX^{*}$ converges weakly to the Marchenko-Pastur (MP) law as $N\to \infty$ \cite{MP}:
\begin{equation}\label{mp_density}
\rho_{1c}(x)dx=\frac{c}{2\pi}\frac{\sqrt{(\lambda_{+}-x)(x-\lambda_{-})}}{x}dx, \ \ \lambda_{\pm}=(1\pm c^{-\frac{1}{2}})^2.
\end{equation}
The Stieltjes transform of the MP law $m_{1c}(z)$ has the closed form expression (see (1.2) of \cite{JS})
\begin{equation}\label{defn_m1c}
m_{1c}(z)= \frac{1-c^{-1} -z + i\sqrt{(\lambda_{+}-z)(z-\lambda_{-})}}{2zc^{-1}}.
\end{equation}

\end{defn}
\begin{rem}
From (\ref{relation_m1cm2c}), we have that $m_2(z)$ converges to $m_{2c}(z)$ as $N\to \infty$, where
\begin{equation}
m_{2c}(z)=\frac{c^{-1}-1}{z}+c^{-1}m_{1c}(z) =\frac{ c^{-1} - 1 -z + i\sqrt{(\lambda_{+}-z)(z-\lambda_{-})}}{2z}. \label{GENERMP}
\end{equation}
It is notable that 
\begin{equation}\label{relationm1m2}
-z^{-1}(1+m_{2c}(z))^{-1}=m_{1c}(z).
\end{equation}
\end{rem}

Recall (\ref{linearconstruction1}) and $G(z)=(H-z)^{-1},$ by Schur's complement \cite{KY1}, it is easy to check that
\begin{equation} \label{green2}
G(z) = \left( {\begin{array}{*{20}c}
   { \mathcal{G}_1(z)} &  z^{-1/2}\mathcal{G}_1(z)X \\
   z^{-1/2}X^{*} \mathcal{G}_1(z) & \mathcal{G}_2(z) \\
\end{array}} \right),
\end{equation}
for $\mathcal G_{1,2}$ defined in (\ref{def_green}).  Denote the index sets
$\mathcal I_1:=\{1,...,M\}, \ \mathcal I_2:=\{M+1,...,M+N\}, \ \mathcal I:=\mathcal I_1\cup\mathcal I_2.$
Then we have
\begin{equation*}
m_1(z)=\frac{1}{M}\sum_{i\in \mathcal I_1}G_{ii}, \ \ m_2(z)=\frac{1}{N}\sum_{\mu \in \mathcal I_2}G_{\mu\mu}.
\end{equation*}
Similarly, we denote $\tilde{G}(z)=(\tilde{H}-z)^{-1},$ where $\tilde{H}$ is defined in (\ref{linearconstruction1}). Next we introduce the spectral decomposition of $\tilde{G}(z)$.  By (\ref{green2}), we have
\begin{equation}\label{main_representation}
\tilde{G}(z) = \sum_{k=1}^{K} \frac{1}{\mu_k-z} \left( {\begin{array}{*{20}c}
   { \tilde{u} \tilde{u}^*_k} &  z^{-1/2} \sqrt{\mu}_k \tilde{u}_k \tilde{v}_k^* \\
   z^{-1/2} \sqrt{\mu}_k\tilde{v}_k \tilde{u}_k^* & \tilde{v}_{k} \tilde{v}_{k}^*  \\
\end{array}} \right).
\end{equation}

As we have seen in (\ref{defn_pd}), the function $p(d)$ plays a key role in describing the convergent limits of the outlier singular values of $\tilde{S}.$  An elementary computation yields that $p(d)$ attains its global minimum when $d=c^{-1/4}$ and $p(c^{-1/4})=\lambda_+$, and
\begin{equation}\label{defn_pprime}
p^{\prime}(x) \sim (x-c^{-1/4}).
\end{equation}
To precisely locate the outlier singular values of $\tilde{S},$  we need to analyze
\begin{equation} \label{defn_ts}
 T^s(x):=\prod_{i=1}^s (xm_{1c}(x)m_{2c}(x)-d_i^{-2}). 
\end{equation}
By (\ref{defn_m1c}) and (\ref{GENERMP}), when $x \geq \lambda_+,$ we have
\begin{equation} \label{notation_Tprimex}
xm_{1c}(x)m_{2c}(x)=\frac{x-(1+c^{-1})-\sqrt{(x+c^{-1}-1)^2-4c^{-1}x}}{2c^{-1}}.
\end{equation}
Next we collect the preliminary results of the properties of $T^s(x)$, whose proof will be provided in the supplementary  material \cite{DBS}.  
\begin{lem} \label{lem_zm1zm2z} Suppose $d_1 >d_2>\cdots>d_s>c^{-1/4}$, then we have that
there exist $s$ solutions of $T^s(x)=0$ and they are $p_i:=p(d_i),  i=1, 2, \cdots, s,$ write
\begin{equation} \label{tpequality}
T^s(p_i)=0.
\end{equation}
Furthermore, denote 
\begin{equation}\label{defn_caltz}
\mathcal{T}(x):=xm_{1c}(x)m_{2c}(x),
\end{equation}
$\mathcal{T}(x)$ is a strictly monotone decreasing function when $x>\lambda_{+}.$ 
\end{lem}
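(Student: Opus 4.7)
The three parts are interrelated, with item $(3)$ supplying the monotonicity that drives $(1)$ and $(2)$, so I would prove them in the order $(3) \to (1) \to (2)$.

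For $(3)$, the explicit representation (\ref{notation_Tprimex}) gives $\mathcal{T}(x) = \frac{x - (1+c^{-1}) - \sqrt{(x+c^{-1}-1)^2 - 4c^{-1}x}}{2c^{-1}}$ for $x \geq \lambda_+$. A direct differentiation yields $\mathcal{T}'(x) = \frac{1}{2c^{-1}}\!\left(1 - \frac{x - 1 - c^{-1}}{\sqrt{(x+c^{-1}-1)^2 - 4c^{-1}x}}\right)$. To show this is negative on $(\lambda_+,\infty)$ I would verify the short algebraic identity $(x-1-c^{-1})^2 - \bigl[(x+c^{-1}-1)^2 - 4c^{-1}x\bigr] = 4c^{-1}$, obtained by expanding both squares and cancelling cross terms. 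Since $x > \lambda_+ = (1+c^{-1/2})^2 > 1 + c^{-1}$ makes $x - 1 - c^{-1}$ strictly positive, taking square roots gives $\mathcal{T}'(x) < 0$, as required.

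For $(1)$, I would substitute $x = p(d) = (d^2+1)(d^2+c^{-1})/d^2$ into the radicand. Clearing the denominator gives $x + c^{-1} - 1 = (d^4 + 2c^{-1}d^2 + c^{-1})/d^2$, and expanding $(x+c^{-1}-1)^2 - 4c^{-1}x$ term by term, I expect a clean collapse to $(d^4 - c^{-1})^2/d^4$. Since $d > c^{-1/4}$, the square root equals $(d^4 - c^{-1})/d^2$, and plugging this back into $\mathcal{T}(p(d))$ produces the value $d^{-2}$ after the $d^4$ terms cancel. This shows each $p_i$ solves $\mathcal{T}(x) = d_i^{-2}$ and is therefore a zero of the factor $\mathcal{T}(x) - d_i^{-2}$, hence of $T^s$. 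By the strict monotonicity from part $(3)$, each factor has a unique zero, and a short check that $p'(d) = 2d - 2c^{-1}/d^3 > 0$ on $(c^{-1/4}, \infty)$ shows the $p_i$ are distinct; thus $T^s$ has exactly the $s$ claimed roots.

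For $(2)$, the ordering $d_1 > d_2 > \cdots > d_s > c^{-1/4}$ combined with the strict monotonicity of $p$ gives $p_1 > p_2 > \cdots > p_s > \lambda_+$, all lying in the smoothness region of $\mathcal{T}$. Applying Rolle's theorem to $T^s$ on each $[p_{i+1}, p_i]$ produces an interior critical point $x_i \in (p_{i+1}, p_i)$, yielding the $s-1$ desired points. The main obstacle is the algebraic reduction in $(1)$: showing the radicand collapses to a perfect square at $x = p(d)$. This is the only nontrivial cancellation; the remaining work is elementary calculus or a one-line invocation of Rolle's theorem, but that identity is where the specific form of $p(d)$ justifies itself and where a computational slip would be easiest to make.
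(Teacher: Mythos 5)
Your proposal is correct and follows essentially the same route as the paper: part (3) via the closed form (\ref{notation_Tprimex}) and the observation that the derivative of the square-root term exceeds $1$ for $x>\lambda_+$ (your identity $(x-1-c^{-1})^2-\bigl[(x+c^{-1}-1)^2-4c^{-1}x\bigr]=4c^{-1}$ is exactly what underlies the paper's bound $g'(t)>1$), part (1) by the direct substitution showing $\mathcal{T}(p(d))=d^{-2}$, and part (2) by Rolle/mean value theorem on consecutive roots. You merely carry out explicitly the "elementary calculation" the paper leaves to the reader, so no substantive difference.
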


%

 For $z \in \mathbf{D}(\tau)$ defined in (\ref{DOMAIN1}), denote 
\begin{equation} \label{defn_kappa}
\kappa:=|E-\lambda_{+}|.
\end{equation}
By (\ref{notation_Tprimex}), it is easy to check that
\begin{equation} \label{tzmc21}
\mathcal{T}(z)-c^{1/2}=\frac{z-\lambda_+-i \sqrt{(\lambda_+-z)(z-\lambda_{-})}}{2c^{-1}}.
\end{equation}
The following lemma summarizes the basic properties of $m_{2c}(z)$ and $\mathcal{T}(z)$, the estimates are based on the elementary calculations of (\ref{notation_Tprimex}) and (\ref{tzmc21}). Their proofs can be found in \cite[Lemma 3.3]{BEKYY} and \cite[Lemma 3.6]{BKYY}.
\begin{lem}\label{lem_boundsm1m2} 
 For any $z \in \mathbf{D}(\tau)$ defined in (\ref{DOMAIN1}), we have
\begin{equation*}
|\mathcal{T}(z)| \sim |m_{2c}(z)| \sim 1, \ | c^{1/2}-\mathcal{T}(z)| \sim |1-m^{2}_{2c}(z)| \sim \sqrt{\kappa+\eta},
\end{equation*}
and 
\begin{equation*}
\operatorname{Im} \mathcal{T}(z)  \sim
\operatorname{Im}m_{2c}(z) \sim
\begin{cases}
 \sqrt{\kappa+\eta}, & \text{if} \ E \in [\lambda_{-}, \lambda_{+}] ,\\
\frac{\eta}{\sqrt{\kappa+\eta}}, & \text{if} \ E \notin [\lambda_{-}, \lambda_{+}].
\end{cases},
\end{equation*}
as well as
\begin{equation} \label{lem_realboundt}
| \operatorname{Re} \mathcal{T}(z)-c^{1/2} | \sim
\begin{cases}
\frac{\eta}{\sqrt{\kappa+\eta}}+\kappa, \ E \in [\lambda_{-}, \lambda_{+}], \\
 \sqrt{\kappa+\eta}, \ \ \ \ E \notin [\lambda_{-}, \lambda_{+}].
 \end{cases}.
\end{equation}
\end{lem} 
The next lemma provides the local estimate on the derivative of $\mathcal{T}(x)$ on the real axis. We put its proof in the  supplementary material \cite{DBS}.
\begin{lem}  \label{lem_derivativebound}
For $d>c^{-1/4},$ denote
$I_{d}:=[x_{-}(d), x_{+}(d)], \ x_{\pm}(d):=p(d) \pm N^{-1/2+\epsilon_0}(d-c^{-1/4})^{1/2},$ where $\epsilon_0$ is defined in (\ref{defn_mathcalo}). Then $\forall \ x \in I_d,$ we have that 
\begin{equation} \label{derivtiveboundeq}
\mathcal{T}^{\prime}(x) \sim (d-c^{-1/4})^{-1}.
\end{equation}
\end{lem}

The following perturbation identity plays the key role in our proof, as it naturally provides us a way to incorporate the Green functions using a deterministic equation. Its proof can be found in \cite[Lemma 6.1]{KY2}.
\begin{lem}\label{lem_perb}  Recall (\ref{linearconstruction1}), assume $\mu \in \mathbb{R} / \bm{\sigma}(H)$ and $\det \mathbf{D} \neq 0$, then $\mu \in \bm{\sigma}(\tilde{H})$ if and only if
\begin{equation} \label{lem_perbubationequation}
\det(\mathbf{U}^*G(\mu)\mathbf{U}+\mathbf{D}^{-1})=0.
\end{equation}
\end{lem}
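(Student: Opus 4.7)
The plan is to establish the stated identity as a standard secular-equation argument via the Schur complement, making careful use of the hypotheses $\mu \notin \bm{\sigma}(H)$ and $\det \mathbf{D}\neq 0$. The key observation is that, with $z$ fixed at $\mu$, both $H$ and $\tilde H$ are ordinary $(M+N)\times(M+N)$ matrices related by the finite–rank additive perturbation $\tilde H = H + \mathbf U\mathbf D\mathbf U^{*}$, so determining whether $\mu$ is an eigenvalue of $\tilde H$ reduces to computing $\det(\tilde H - \mu I)$. I will interpret $G(\mu) = (H - \mu I)^{-1}$, which is well-defined precisely because $\mu\notin\bm{\sigma}(H)$.

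First I would write
\begin{equation*}
\tilde H - \mu I \;=\; (H-\mu I) + \mathbf U\mathbf D\mathbf U^{*}
\;=\; G(\mu)^{-1}\bigl(I + G(\mu)\,\mathbf U\mathbf D\mathbf U^{*}\bigr).
\end{equation*}
Taking determinants and using $\det G(\mu)^{-1}\neq 0$, one sees that $\mu\in\bm{\sigma}(\tilde H)$ if and only if $\det(I + G(\mu)\,\mathbf U\mathbf D\mathbf U^{*}) = 0$. Next I would apply the Weinstein–Aronszajn / Sylvester identity $\det(I_{m} + AB) = \det(I_{n} + BA)$ with $A = G(\mu)\mathbf U$ and $B = \mathbf D\mathbf U^{*}$ to collapse this into a determinant on the small $2r\times 2r$ space:
\begin{equation*}
\det\bigl(I + G(\mu)\,\mathbf U\mathbf D\mathbf U^{*}\bigr)
\;=\; \det\bigl(I_{2r} + \mathbf D\,\mathbf U^{*}G(\mu)\mathbf U\bigr).
\end{equation*}
Factoring $\mathbf D$ out and using $\det\mathbf D\neq 0$, this equals $\det\mathbf D\cdot\det\bigl(\mathbf D^{-1} + \mathbf U^{*}G(\mu)\mathbf U\bigr)$, and one concludes the ``iff'' statement in the lemma.

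There is no real obstacle to this argument; it is purely algebraic. The only points requiring a bit of care are (i) making sure the meaning of ``$\mu\in\bm{\sigma}(\tilde H)$'' is the one inherited from the linearization identity between the singular values of $\tilde S$ and the spectrum of the block matrix in (\ref{linearconstruction1}) — which is precisely the content of the singular-value/eigenvalue correspondence recalled right after that display — and (ii) verifying that both factorizations above are legitimate, i.e., that $G(\mu)^{-1} = H-\mu I$ is invertible (secured by $\mu\notin\bm{\sigma}(H)$) and that $\mathbf D$ is invertible (secured by $\det\mathbf D\neq 0$). Once those are noted, the chain of determinant manipulations is routine and yields the equation (\ref{lem_perbubationequation}).
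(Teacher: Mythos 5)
Your argument is correct, and it is essentially the argument behind the paper's proof: the paper simply defers to \cite{KY2} (Lemma 6.1 there), whose proof is exactly this chain — factor $\tilde{H}-\mu I=(H-\mu I)\bigl(I+G(\mu)\mathbf{U}\mathbf{D}\mathbf{U}^{*}\bigr)$, apply $\det(I+AB)=\det(I+BA)$, and pull out $\det\mathbf{D}\neq 0$. Your two points of care (invertibility of $H-\mu I$ from $\mu\notin\bm{\sigma}(H)$, and of $\mathbf{D}$) are precisely the hypotheses of the lemma, so nothing is missing.
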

The following lemma establishes the connection between the Green functions of $H$ and $\tilde{H}$ defined in (\ref{linearconstruction1}), which is proved in the supplementary material \cite{DBS}. 
\begin{lem}\label{lem_gtitle} For $z \in \mathbb{C}^+,$ we have
\begin{equation}\label{defn_greenrep1}
\tilde{G}(z)=G(z)-G(z)\mathbf{U}(\mathbf{D}^{-1}+\mathbf{U}^*G(z)\mathbf{U})^{-1} \mathbf{U}^* G(z),
\end{equation}
and 
\begin{equation} \label{defn_greenrep2}
\mathbf{U}^* \tilde{G}(z) \mathbf{U}=\mathbf{D}^{-1}-\mathbf{D}^{-1}(\mathbf{D}^{-1}+\mathbf{U}^*G(z)\mathbf{U})^{-1}\mathbf{D}^{-1}.
\end{equation}
\end{lem}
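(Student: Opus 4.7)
}
The plan is to obtain \eqref{defn_greenrep1} as an instance of the Woodbury (Sherman--Morrison--Woodbury) matrix identity, and then to derive \eqref{defn_greenrep2} from \eqref{defn_greenrep1} by a short algebraic manipulation. Concretely, since $\tilde H = H + \mathbf{U}\mathbf{D}\mathbf{U}^{*}$ by \eqref{linearconstruction1}, we have
\begin{equation*}
\tilde G(z) \;=\; \bigl(H-z + \mathbf{U}\mathbf{D}\mathbf{U}^{*}\bigr)^{-1}.
\end{equation*}
For $z\in\mathbb{C}^{+}$ the operator $H-z$ is invertible with inverse $G(z)$, and $\mathbf{D}$ is invertible because $D$ has strictly positive diagonal and $z\in\mathbb{C}^{+}$ (so $z^{1/2}D$ is invertible, making the block matrix $\mathbf{D}$ in \eqref{defn_mathbfuv} invertible). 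Applying Woodbury with $A=H-z$, outer factors $\mathbf{U}$ and $\mathbf{U}^{*}$, and inner factor $\mathbf{D}$ yields \eqref{defn_greenrep1} directly.

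For \eqref{defn_greenrep2}, the plan is to multiply \eqref{defn_greenrep1} by $\mathbf{U}^{*}$ on the left and $\mathbf{U}$ on the right. Writing $M:=\mathbf{U}^{*}G(z)\mathbf{U}$ for brevity, this gives
\begin{equation*}
\mathbf{U}^{*}\tilde G(z)\mathbf{U} \;=\; M - M\bigl(\mathbf{D}^{-1}+M\bigr)^{-1}M.
\end{equation*}
From the trivial identity $\mathbf{D}^{-1}(\mathbf{D}^{-1}+M)^{-1} + M(\mathbf{D}^{-1}+M)^{-1} = I$ one gets $M(\mathbf{D}^{-1}+M)^{-1} = I - \mathbf{D}^{-1}(\mathbf{D}^{-1}+M)^{-1}$, so
\begin{equation*}
M - M\bigl(\mathbf{D}^{-1}+M\bigr)^{-1}M \;=\; \mathbf{D}^{-1}\bigl(\mathbf{D}^{-1}+M\bigr)^{-1}M.
\end{equation*}
Symmetrically, from $(\mathbf{D}^{-1}+M)^{-1}\mathbf{D}^{-1} + (\mathbf{D}^{-1}+M)^{-1}M = I$ one obtains $(\mathbf{D}^{-1}+M)^{-1}M = I - (\mathbf{D}^{-1}+M)^{-1}\mathbf{D}^{-1}$, which after left multiplication by $\mathbf{D}^{-1}$ gives
\begin{equation*}
\mathbf{D}^{-1}\bigl(\mathbf{D}^{-1}+M\bigr)^{-1}M \;=\; \mathbf{D}^{-1} - \mathbf{D}^{-1}\bigl(\mathbf{D}^{-1}+M\bigr)^{-1}\mathbf{D}^{-1},
\end{equation*}
which is exactly \eqref{defn_greenrep2}.

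The argument is entirely algebraic with no estimates; there is no genuine obstacle. The only mild subtlety to verify is that the three matrices being inverted, namely $H-z$, $\mathbf{D}$, and $\mathbf{D}^{-1}+\mathbf{U}^{*}G(z)\mathbf{U}$, are all invertible for $z\in\mathbb{C}^{+}$, so that Woodbury is legitimate; the first two are immediate from the structure of $H$ and \eqref{defn_mathbfuv}, while the third follows either by noting that $\mathbf{U}^{*}G(z)\mathbf{U}$ has nonzero imaginary part for $z\in\mathbb{C}^{+}$ (since $G(z)$ does) so that $\mathbf{D}^{-1}+\mathbf{U}^{*}G(z)\mathbf{U}$ is nonsingular, or equivalently by invoking Lemma~\ref{lem_perb}, which characterizes the singular values of $\tilde S$ as the zeros of this determinant on $\mathbb{R}\setminus\bm{\sigma}(H)$.
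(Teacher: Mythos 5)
Your proposal is correct and follows essentially the same route as the paper: \eqref{defn_greenrep1} is the Woodbury identity applied to $\tilde H-z=(H-z)+\mathbf{U}\mathbf{D}\mathbf{U}^{*}$, and \eqref{defn_greenrep2} is obtained by sandwiching with $\mathbf{U}^{*},\mathbf{U}$ and then using the identity $A-A(A+B)^{-1}A=B-B(A+B)^{-1}B$ with $A=\mathbf{U}^{*}G\mathbf{U}$, $B=\mathbf{D}^{-1}$, which the paper cites from the literature and you simply verify inline. Your extra remarks on invertibility (which the paper omits) are welcome, though the cleanest justification is that for $z\in\mathbb{C}^{+}$ both $H-z$ and $\tilde H-z$ are invertible, which forces invertibility of $\mathbf{D}^{-1}+\mathbf{U}^{*}G(z)\mathbf{U}$; the ``positive imaginary part'' argument is delicate here since $H$ and $\mathbf{D}$ are not Hermitian due to the $z^{1/2}$ factors, and Lemma~\ref{lem_perb} as stated concerns only real $\mu$.
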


One of the key ingredients of our computation are the local laws. We firstly introduce the anisotropic local law, which can be found in \cite[Theorem 3.6]{KY1}. Denote 
\begin{equation} \label{defn_psi}
\Psi(z):=\sqrt{\frac{\operatorname{Im}m_{2c}(z)}{N \eta}}+\frac{1}{N \eta},  \ \  \underline{\Sigma}:=\left( {\begin{array}{*{20}c}
   {z^{-1/2}} & 0  \\
   0 & {I}  \\
   \end{array}} \right),
\end{equation}
and $m(z) \equiv m_{N}(z)$ as the unique solution of the equation 
\begin{equation*}
f(m(z))=z, \ \operatorname{Im} m(z) \geq 0, \ f(x)=-\frac{1}{x}+\frac{1}{c_N} \frac{1}{x+1}.
\end{equation*}
Recall (\ref{green2}), the following lemma shows that $G(z)$ converges to a deterministic matrix $\Pi(z)$ with high probability. 
\begin{lem}\label{lem_anisotropic}  Fix $\tau > \epsilon_1$, then  for all $z \in \mathbf{D}(\tau)$, with $1-N^{-D_1}$ probability,  for any unit deterministic vectors $\mathbf{u}, \mathbf{v} \in \mathbb{R}^{M+N},$  we have 
\begin{equation}\label{thm_anisotropic_eq11}
|<\mathbf{u}, \underline{\Sigma}^{-1}(G(z)-\Pi(z)) \underline{\Sigma}^{-1}\mathbf{v}>|\leq N^{\epsilon_1} \Psi(z), \ |m_2(z)-m(z)|\leq \frac{N^{\epsilon_1}}{N\eta},
\end{equation}
where $\Pi(z)$ is defined as 
\begin{equation} \label{convergentlimit}
\Pi(z):=\left( {\begin{array}{*{20}c}
   { -z^{-1}(1+m(z))^{-1}} & 0  \\
   0 & {m(z)}  \\
   \end{array}} \right).
\end{equation}
\end{lem}
It is notable that in general, $m(z)$ depends  on $N$ and Lemma \ref{lem_boundsm1m2} also holds for $m(z).$ However, in our computation, we can replace $m(z)$ with $m_{2c}(z)$ due to the following local MP law, which is proved in \cite[Theorem 3.1]{PY}. 
\begin{lem}\label{lem_locallaw}Fix $\tau > \epsilon_1$, then  for all $z \in \mathbf{D}(\tau)$, with $1-N^{-D_1}$ probability, we have
\begin{equation*}
|m_2(z)-m_{2c}(z)| \leq N^{\epsilon_1} \Psi(z).
\end{equation*}
\end{lem}

Beyond the support of the limiting spectrum of the MP law, we have stronger results all the way down to the real axis. More precisely, define the region
\begin{equation}\label{defn_tilded}
\tilde{\mathbf{D}}(\tau,\epsilon_1):=\{z \in \mathbb{C}^{+}:\lambda_+ +N^{-2/3+\epsilon_1} \leq E \leq \tau^{-1}, 0<\eta \leq \tau^{-1} \},
\end{equation}
then we have the following stronger control on $\tilde{\mathbf{D}}(\tau, \epsilon_1).$ The proof can be found in \cite[Theorem 3.12]{BEKYY} and  \cite[Theorem 3.7]{KY1}. 
\begin{lem} \label{lem_isotropic_outspec} For $z \in \tilde{\mathbf{D}}(\tau, \epsilon_1), $ with $1-N^{-D_1}$ probability, we have
\begin{equation*}
|<u,\mathcal{G}_2(z)v>-m_{2c}(z)<u,v>| \leq N^{-1/2+\epsilon_1}(\kappa+\eta)^{-1/4},
\end{equation*} 
for all unit vectors $u, v \in \mathbb{R}^N.$ Similar result holds for $\mathcal{G}_1(z), m_{1c}(z).$  Furthermore,  for any deterministic vectors $\mathbf{u}, \mathbf{v} \in \mathbb{R}^{M+N}$, we have 
\begin{equation} \label{thm_anisotropicoutspec}
|<\mathbf{u}, \underline{\Sigma}^{-1}(G(z)-\Pi(z))\underline{\Sigma}^{-1}\mathbf{v}>| \leq N^{-1/2+\epsilon_1}(\kappa+\eta)^{-1/4}.
\end{equation}
\end{lem}
Denote  the non-trivial classical eigenvalue locations $\gamma_1 \geq \gamma_2 \geq \cdots \geq \gamma_{K}$ of $XX^*$ as $\int_{\gamma_i}^{\infty} d \rho_{1c} =\frac{i}{N}$,  where $\rho_{1c}$ is defined in (\ref{mp_density}).  The consequent result of Lemma \ref{lem_anisotropic} is the rigidity of eigenvalues,  which can be found in \cite[Theorem 3.5]{BKYY}. 
\begin{lem}\label{lem_rigidity}
Fix any small $\tau \in (0,1),$ for $1 \leq i \leq (1-\tau)K,$  with $1-N^{-D_1}$ probability, we have
\begin{equation*}
|\lambda_{i}-\gamma_i| \leq N^{-2/3+\epsilon_1}(i\wedge (K+1-i))^{-1/3}.
\end{equation*}
Furthermore, if $c \neq 1,$ the above estimate holds for  all $i=1,2,\cdots, K.$
\end{lem}
Using Lemma \ref{lem_rigidity}, we find that $\kappa_j^d$ defined in (\ref{thm_non_left}) is a deterministic version of $\kappa_{\mu_j}=|\mu_j-\lambda_+|.$
\section{Proofs of Theorem \ref{thm_location} and \ref{thm_sigularvector} }\label{section_ev}
\subsection{Singular values}
In this subection, we focus on the singular values of $\tilde{S}$ and prove Theorem \ref{thm_location}. We will follow the basic idea of \cite{KY2} and slightly modify the proof.  A key deviation from their proof is that our matrix $\mathbf{D}$ defined in (\ref{defn_mathbfuv}) is not diagonal, it appears that in order to analyze (\ref{lem_perbubationequation}), they only need to deal with the diagonal elements but we need to control the whole matrix. We will make use of the following interlacing theorem  for rectangular matrices, the proof can be found in  \cite[Exercise 1.3.22]{Tao}.
\begin{lem}\label{lem_interlacing} For any $M \times N$ matrices $A, B$, denote $\sigma_i(A)$ as the $i$-th largest singular value of $A$, then we have
\begin{equation*}
\sigma_{i+j-1}(A+B) \leq \sigma_i(A)+\sigma_j(B), \ 1 \leq i,j, i+j-1 \leq K.
\end{equation*}
\end{lem}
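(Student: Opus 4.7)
The plan is to prove the Weyl-type inequality for singular values by reducing it to the standard min--max (Courant--Fischer) characterization. For an $M\times N$ matrix $A$ with $K=\min\{M,N\}$, the $i$-th largest singular value admits the variational description
\begin{equation*}
\sigma_i(A) \;=\; \min_{\substack{V \subset \mathbb{C}^N \\ \dim V = N-i+1}} \; \max_{\substack{x\in V \\ \|x\|=1}} \|Ax\|,
\end{equation*}
valid for $1\le i\le K$, which follows from applying the Hermitian Courant--Fischer formula to $A^*A$ together with $\sigma_i(A)=\sqrt{\lambda_i(A^*A)}$. This is the only nontrivial input I will need.

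First I would fix indices $i,j$ with $i+j-1\le K$ and choose subspaces $V_0,W_0\subset\mathbb{C}^N$ of dimensions $N-i+1$ and $N-j+1$ respectively that realize the min--max for $\sigma_i(A)$ and $\sigma_j(B)$. A dimension count in $\mathbb{C}^N$ then gives
\begin{equation*}
\dim(V_0 \cap W_0) \;\ge\; (N-i+1)+(N-j+1)-N \;=\; N-(i+j-1)+1.
\end{equation*}
Hence $V_0\cap W_0$ contains some subspace $U$ of dimension exactly $N-(i+j-1)+1$, which is admissible as a test subspace in the min--max formula for $\sigma_{i+j-1}(A+B)$.

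Next, for any unit vector $x\in V_0\cap W_0$, the triangle inequality in $\mathbb{C}^M$ combined with the defining property of $V_0$ and $W_0$ gives
\begin{equation*}
\|(A+B)x\| \;\le\; \|Ax\|+\|Bx\| \;\le\; \sigma_i(A)+\sigma_j(B).
\end{equation*}
Taking the maximum over unit $x\in U \subset V_0\cap W_0$ and then applying the min--max characterization of $\sigma_{i+j-1}(A+B)$ with the particular choice of subspace $U$ yields $\sigma_{i+j-1}(A+B)\le \sigma_i(A)+\sigma_j(B)$, as claimed.

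I do not anticipate a real obstacle: the only point requiring care is the rectangular setting, since the Courant--Fischer formula is typically stated for Hermitian matrices. This is handled cleanly by passing through $(A+B)^*(A+B)$, $A^*A$, $B^*B$ and using $\sigma_k(\cdot)=\sqrt{\lambda_k(\cdot^*\cdot)}$, which keeps all the min--max subspaces inside $\mathbb{C}^N$ and makes the dimension count above well posed. The index restriction $i+j-1\le K$ is exactly what ensures $\sigma_{i+j-1}(A+B)$ is defined and that the admissible subspace $U$ has positive dimension.
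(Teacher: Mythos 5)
Your proof is correct, and it follows exactly the route the paper indicates: the paper gives no written-out argument but states that the lemma is elementary via the Courant--Fischer characterization (citing Tao, Exercise 1.3.22), which is precisely the min--max subspace-intersection argument you carry out, with the dimension count and the passage through $A^*A$ handled correctly.
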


 The proof  relies on two main steps: (i) fix a configuration independent of $N$, establish two permissible regions, $\Gamma(\mathbf{d})$ of $k^+$ components and $I_0,$ where the outliers of $\tilde{S} \tilde{S}^*$ are allowed to lie in $\Gamma(\mathbf{d})$ and each component contains precisely one eigenvalue and the $r-k^+$ non-outliers lie in $I_0$; (ii) a continuity argument where the result of (i) can be extended to arbitrary $N-$dependent $\mathbf{D}$.  

 The following $2r \times 2r$ matrix plays the key role in our analysis
 \begin{equation}\label{def_mrz}
M^r(x):=\mathbf{U}^*G(x)\mathbf{U}+\mathbf{D}^{-1}.
\end{equation}
By Lemma \ref{lem_perb}, $x \in \bm{\sigma}(\tilde{S}\tilde{S}^*)$ if and only if $\det M^r(z)=0.$ Using Lemma \ref{lem_locallaw} and \ref{lem_isotropic_outspec},
we find that $x^{-r} T^r(x) \approx \det M^r(x)$, where $T^r(x)$ is defined in (\ref{defn_ts}). As $T^r(x)$ behaves differently in $\Gamma(\mathbf{d})$ and $I_0,$ we will use different strategies to prove (\ref{thm_location_outlier}) and (\ref{thm_location_bulk}). 

We remark that, our discussion is slightly easier than \cite[Section 6]{KY2}, in particular the counting argument of the non-outliers. The reason is, for the application purpose, we only need the result of (\ref{thm_location_bulk}) to locate the eigenvalues around $\lambda_+.$ However, in \cite{KY2}, they have stronger results to stick the eigenvalues of $\tilde{S}\tilde{S}^*$ around those of  $XX^*.$ We will not pursue this generalization in this paper.

\begin{proof}[Proof of Theorem \ref{thm_location}] Denote $k^0:=r-k^+$ and write 
\begin{equation*}
\mathbf{d}=(d_1, \cdots, d_r)=( \mathbf{d}^0, \mathbf{d}^+), \ \mathbf{d}^{\sigma}=(d_1^{\sigma},\cdots, d_{k^{\sigma}}^{\sigma}), \ \sigma=0,+,
\end{equation*}
where we adapt the convention 
\begin{equation*}
d_{k^0}^0 \leq \cdots \leq d^0_{1} \leq c^{1/4}<d_{k^+}^+ \leq \cdots \leq d_{1}^+, \  k^0+k^+=r.
\end{equation*}
\noindent Next we define the sets 
\begin{equation} \label{defn_D+}
\mathcal{D}^+(\epsilon_0):=\{\mathbf{d}^+: c^{-1/4}+N^{-1/3+\epsilon_0} \leq d^+_i \leq \tau^{-1}, \ i=1, \cdots, k^+ \}, 
\end{equation}
\begin{equation}\label{defn_D0}
\mathcal{D}^0(\epsilon_0):=\{\mathbf{d}^0: 0<d_i^0 < c^{-1/4}+N^{-1/3+\epsilon_0},\ i=1, \cdots, k_0\},
\end{equation}
and the sets of allowed $\mathbf{d}'s,$ which is $
\mathcal{D}(\epsilon_0):= \{(\mathbf{d}^0, \mathbf{d}^+): \ \mathbf{d}^{\sigma} \in \mathcal{D}^{\sigma}(\epsilon_0), \ \sigma=+,0 \}.$ Denote the following sequence of intervals 
\begin{equation}\label{defn_i+d}
I_{i}^+(\mathbf{d}):=[p(d_i^+)-N^{-1/2+\epsilon_3}(d_i^+-c^{-1/4})^{1/2},  \ p(d_i^+)+N^{-1/2+\epsilon_3}(d_i^+-c^{-1/4})^{1/2}],
\end{equation}
where $\epsilon_3$ satisfies the following condition
\begin{equation}\label{conditionepsilon3}
C \epsilon_1 <\epsilon_3 <\frac{1}{4}\epsilon_0, \ C>2 \ \text{is some large constant.}
\end{equation} 
For $\mathbf{d} \in \mathcal{D}(\epsilon_0),$ we denote $\Gamma(\mathbf{d}):= \cup_{i=1}^{k^+} I_i^+(\mathbf{d}) $ and $I^0:=[\lambda_+-N^{-2/3+C^{\prime}\epsilon_0}, \ \lambda_++N^{-2/3+C^{\prime}\epsilon_0}]$, where $C^{\prime}$ satisfies $2<C^{\prime}<4$.

For a first step, we show that $\Gamma(\mathbf{d})$ is our permissible region which keeps track of the outlier eigenvalues of  $\tilde{S} \tilde{S}^*.$  And the rest of the eigenvalues  corresponding to $\mathcal{D}^0(\epsilon_0)$ will lie in $I^0.$ We fix a configuration $\mathbf{d}(0) \equiv \mathbf{d}$ that is independent of $N$ in this step.
\begin{lem}\label{lem_permissble} For any $\mathbf{d} \in \mathcal{D}(\epsilon_0)$, with $1-N^{-D_1}$ probability, we have
\begin{equation}\label{lem_permissble_subset}
\bm{\sigma^+}(\tilde{S}\tilde{S}^*)  \subset \Gamma(\mathbf{d}),
\end{equation}
where $\bm{\sigma^+}(\tilde{S}\tilde{S}^*)$ is the set of the outlier eigenvalues of $\tilde{S}\tilde{S}^*$ associated with $\mathcal{D}^{+}(\epsilon_0).$ Moreover, each interval $I^+_i(\mathbf{d})$ contains precisely one eigenvalue of $\tilde{S} \tilde{S}^*, \ i=1,2, \cdots, k^+$. Furthermore, we have
\begin{equation}\label{lem_permissble_bulk}
\bm{\sigma^o}(\tilde{S}\tilde{S}^*) \subset I^0,
\end{equation}
where $\bm{\sigma^o}(\tilde{S}\tilde{S}^*)$ is the set of the non-outlier eigenvalues corresponding to $\mathcal{D}^0(\epsilon_0).$
\end{lem}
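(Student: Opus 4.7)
The plan is to reduce everything to locating the zeros of a $2r\times 2r$ deterministic-size determinant and then run a quantitative counting argument that mirrors the rank-one analysis. With probability $1-N^{-D_1}$, Lemma \ref{lem_rigidity} gives $\bm\sigma(H)\subset[-\lambda_+-N^{-2/3+\epsilon_1},\lambda_++N^{-2/3+\epsilon_1}]$, so for every $x>\lambda_++N^{-2/3+\epsilon_1}$ Lemma \ref{lem_perb} applies and $x\in\bm\sigma(\tilde S\tilde S^*)$ iff $\det M^r(x)=0$, where $M^r(x):=\mathbf{U}^*G(x)\mathbf{U}+\mathbf{D}^{-1}$.

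The key analytic input is the anisotropic law on the real axis (Lemma \ref{lem_realparameter}) applied to the $2r$ deterministic columns of $\mathbf{U}$, which yields entrywise
\begin{equation*}
\mathbf{U}^*G(x)\mathbf{U}=\begin{pmatrix}m_{1c}(x)I_r & 0\\ 0 & m_{2c}(x)I_r\end{pmatrix}+E(x),\qquad \|E(x)\|\prec N^{-1/2+\epsilon_1}\kappa_x^{-1/4},
\end{equation*}
where $\kappa_x=|x-\lambda_+|$. Using the block form $\mathbf{D}^{-1}=x^{-1/2}\begin{pmatrix}0 & D^{-1}\\ D^{-1} & 0\end{pmatrix}$ and a Schur-complement expansion together with the identity $\mathcal T(x)=xm_{1c}(x)m_{2c}(x)$, one obtains
\begin{equation*}
\det M^r(x)=x^{-r}T^r(x)+R^r(x),\qquad |R^r(x)|\prec N^{-1/2+\epsilon_1}\kappa_x^{-1/4},
\end{equation*}
with $T^r$ defined in \eqref{defn_ts}. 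Thus the zeros of $\det M^r$ track those of $T^r$ whenever the error $R^r$ is subdominant.

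To establish \eqref{lem_permissble_subset}, take any $x>\lambda_++N^{-2/3+C'\epsilon_0}$ outside every $m_i^+(\mathbf{d})$. By Lemma \ref{lem_derivativebound} together with the strict monotonicity of $\mathcal T$ from Lemma \ref{lem_zm1zm2z}(3), each outlier factor satisfies $|\mathcal T(x)-(d_i^+)^{-2}|\ge cN^{-1/2+\epsilon_3}(d_i^+-c^{-1/4})^{-1/2}$, while the remaining factors are of order one by Lemma \ref{lem_mathcaltz}. The product dominates $|R^r(x)|$ by a factor of $N^{\epsilon_3-\epsilon_1}$ thanks to \eqref{conditionepsilon3}, so $\det M^r(x)\ne 0$. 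For the one-eigenvalue-per-component claim I would exploit the strict variation of $x^{-r}T^r$ on $I_i^+(\mathbf d)$: a computation of its derivative at $p(d_i^+)$ via Lemma \ref{lem_derivativebound} gives $|(x^{-r}T^r)'|\sim(d_i^+-c^{-1/4})^{-1}$, which multiplied by the half-width of $I_i^+(\mathbf d)$ produces a variation $\sim N^{-1/2+\epsilon_3}(d_i^+-c^{-1/4})^{-1/2}$ that strictly dominates $\sup_{I_i^+}|R^r|$. Hence $\det M^r$ undergoes exactly one sign change in each $I_i^+(\mathbf d)$, yielding precisely one outlier eigenvalue per component.

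Finally, \eqref{lem_permissble_bulk} follows by combining the previous step with Weyl's interlacing (Lemma \ref{lem_interlacing}): the upper bound $\mu_{k^++1}\le \lambda_++N^{-2/3+C'\epsilon_0}$ holds because all $k^+$ eigenvalues above that threshold have been exhibited inside $\Gamma(\mathbf d)$, and the lower bound follows from $\mu_j(\tilde S\tilde S^*)\ge\lambda_{j+r}(XX^*)\ge\lambda_+-N^{-2/3+\epsilon_1}$ for $j\le r$, where the first inequality is Weyl and the second is Lemma \ref{lem_rigidity}. The most delicate step will be bounding $R^r(x)$ sharply when $x$ is close to a non-coupled class point $p(d_j^0)\in m_i^+(\mathbf d)$: at such $x$ a non-outlier factor of $T^r$ itself becomes small, so one must verify that the explicit surgery $(m_i^+\setminus C_i^+)\cup L_i^+$ in the definition of $I_i^+(\mathbf d)$ excludes these spurious near-zeros from the counting domain while retaining the genuine zero at $p(d_i^+)$ (together with any coincident $p(d_j^0)\in L_i^+$ produced by an exact couple), thereby preserving the one-to-one correspondence between components of $\Gamma(\mathbf d)$ and outlier eigenvalues of $\tilde S\tilde S^*$.
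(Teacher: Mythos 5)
Your overall frame (reduction to $\det M^r(x)=0$ via Lemma \ref{lem_perb} and rigidity, the real-axis anisotropic law of Lemma \ref{lem_realparameter}, comparison of $\det M^r$ with the deterministic quantity $T^r$ from (\ref{defn_ts})) is the same as the paper's, but two steps have genuine gaps. First, the claim that one sign change of $\det M^r$ over $I_i^+(\mathbf d)$ yields \emph{precisely} one eigenvalue is a non sequitur: opposite signs at the endpoints only give an odd number of zero crossings, so you get \emph{at least} one eigenvalue, and you cannot exclude three crossings or a pair of eigenvalues where the determinant touches zero without changing sign. The lemma imposes no non-overlapping condition, so two $d_i^+$ may be arbitrarily close; then two factors of $T^r$ vanish inside the same interval, the deterministic determinant need not change sign there at all, and your endpoint comparison collapses (your derivative estimate $|(x^{-r}T^r)'|\sim(d_i^+-c^{-1/4})^{-1}$ also silently assumes the other factors are of order one). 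The paper's proof counts zeros with multiplicity via Rouch\'e's theorem, with $f=\det M^r$ and the deterministic determinant $g$ on a small $N$-independent contour around $p(d_i^+)$ — this is exactly where the restriction to an $N$-independent configuration (removed later by the continuity argument in the proof of Theorem \ref{thm_location}) enters. Some substitute for this multiplicity-counting input is needed; an intermediate-value argument alone cannot deliver exactness.

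Second, your treatment of the window between $\lambda_++N^{-2/3+C'\epsilon_0}$ and the lowest interval rests on ``the remaining factors are of order one,'' which fails on $\mathcal D(\epsilon_0)$: by (\ref{defn_D0}) the non-outlier $d_j^0$ may lie within $N^{-1/3+\epsilon_0}$ of $c^{-1/4}$ (even slightly above it), in which case $|\mathcal T(x)-(d_j^0)^{-2}|$ is only of order $\sqrt{\kappa_x}+|d_j^0-c^{-1/4}|=O(N^{-1/3+C'\epsilon_0/2})$ near the edge, and several near-threshold $d_i^+$ give several simultaneously small outlier factors. A product of such small factors can be far below your determinant-level error bound $N^{-1/2+\epsilon_1}\kappa_x^{-1/4}\sim N^{-1/3+\epsilon_1-C'\epsilon_0/4}$, so ``the product dominates $R^r$'' does not hold; at best one must argue at the matrix level (smallest singular value of $\mathbf D^{-1}+\mathbf U^*\Pi(x)\mathbf U$ versus $\|\Delta\|$), which is not what you wrote. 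The paper handles this window by a separate argument in the proof of (\ref{lem_permissble_bulk}): it compares $M^r(x)$ with $M^r(x+iN^{-2/3-\epsilon_4})$ through the spectral decomposition of $G$, invokes the anisotropic law at the complex parameter, and uses the choice $C'>\max\{2,4\epsilon_1/\epsilon_0\}$ in (\ref{defn_cprime}) to keep the spurious zeros $p(d_j^0)$ (which, for $d_j^0$ slightly above $c^{-1/4}$, sit within $N^{-2/3+2\epsilon_0}$ of $\lambda_+$) strictly below the threshold; your proposal never uses $C'$ and explicitly defers the class/couple surgery that encodes this issue. Your lower bound $\mu_j\geq\lambda_{j+r}(XX^*)\geq\lambda_+-N^{-2/3+\epsilon_1}$ via Lemma \ref{lem_interlacing} and Lemma \ref{lem_rigidity} is fine and matches the paper's.
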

\begin{proof}
First of all, it is easy to check that $\Gamma(\mathbf{d}) \cap I^0=\emptyset$ using (\ref{defn_pprime}) and the fact $C^{\prime}>2$. Denote $S_b:=p(d_{k^+}^+)-N^{-1/2+\epsilon_3}(d_{k^+}^+-c^{-1/4})^{1/2}.$
In order to prove (\ref{lem_permissble_subset}), we first consider the case when $x > S_b.$ It is notable that $x \notin \bm{\sigma}(XX^*)$ by Lemma \ref{lem_rigidity}, (\ref{defn_pprime}) and (\ref{conditionepsilon3}). Recall (\ref{convergentlimit}) and (\ref{def_mrz}), using the fact $r$ is bounded and Lemma \ref{lem_isotropic_outspec}, with $1-N^{-D_1}$ probability, we have
\begin{equation}\label{matrix_estimation}
M^r(x)=\mathbf{U}^*\Pi(x)\mathbf{U}+\mathbf{D}^{-1}+O(N^{-1/2+\epsilon_1}\kappa^{-1/4}).
\end{equation} 
 It is well-known that if $\lambda \in \bm{\sigma}(A+B)$ then $\text{dist}(\lambda, \bm{\sigma}(A)) \leq ||B||; $ therefore, we have that $\mu_i(\tilde{S} \tilde{S}^*) \leq \tau^{-1}, i=1,\cdots, K $ for $\tau>0$  defined in (\ref{DOMAIN1}). Recall (\ref{defn_ts}), by (\ref{defn_pprime}),  (\ref{derivtiveboundeq})  and (\ref{conditionepsilon3}),  with $1-N^{-D_1}$ probability,  we have
\begin{equation} \label{outside_bound}
| T^r(x) | \geq  N^{-1/2+(C-1)\epsilon_1} \kappa^{-1/4},   \   \text{if} \ x  \in [S_b, \tau^{-1}] /\Gamma(\mathbf{d}).
\end{equation}
Using the formula
\begin{equation*}
\det
\begin{bmatrix}
 x I_r  & \text{diag}(\alpha_1, \cdots, \alpha_r) \\ 
 \text{diag}(\alpha_1, \cdots, \alpha_r) & y I_r 
\end{bmatrix}
=\prod_{i=1}^r (xy-\alpha_i^2),
\end{equation*}
Lemma \ref{lem_locallaw}, (\ref{relationm1m2}) and (\ref{matrix_estimation}), we conclude that
\begin{equation} \label{determinateboud}
\det (\mathbf{D}^{-1}+\mathbf{U}^*\Pi(x)\mathbf{U})=x^{-r} T^r(x)+O(N^{-1/2+\epsilon_1}\kappa^{-1/4}).
\end{equation}
By  (\ref{outside_bound}) and (\ref{determinateboud}), we conclude that $M^r(x)$ is non-singular when $x  \in [S_b, \tau^{-1}] / \Gamma(\mathbf{d}).$  

Next we will use Roch{\' e}'s theorem to show that inside the permissible region, each interval $I^+_i(\mathbf{d})$ contains precisely one eigenvalue of $\tilde{S} \tilde{S}^*$. Let $i \in \{1, \cdots, k^{+} \}$ and pick a small $N$-independent counterclockwise (positive-oriented) contour $\mathcal{C} \subset \mathbb{C} /[(1-c^{-1/2})^2, (1+c^{-1/2})^2] $ that encloses $p(d_i^+)$ but no other $p(d_j^+), \ j \neq i$. For large enough $N,$ define
$f(z):=\det(M^r(z)) , \ g(z):= \det(T^r(z)).$
By the definition of determinant, the functions $g, f$ are holomorphic on and inside $\mathcal{C}.$ And $g(z)$ has precisely one zero  $z=p(d_i^+)$ inside $\mathcal{C}.$ On $\mathcal{C}, $ it is easy to check that 
\begin{equation*}
\min_{z \in \mathcal{C}} |g(z)|\geq c>0,  \ |g(z)-f(z)|\leq N^{-1/2+\epsilon_1} \kappa^{-1/4},
\end{equation*}
where we use (\ref{matrix_estimation}) and Lemma \ref{lem_locallaw}. Hence, $f(z)$ has only one zero in $I_i^+(\mathbf{d})$ according to Rouch{\' e}'s theorem.  This concludes the proof of (\ref{lem_permissble_subset}) using Lemma \ref{lem_perb}. In order to prove (\ref{lem_permissble_bulk}), using the following fact: for any two $M \times N$ rectangular matrices $A, B$, we have $\sigma_i(A+B) \geq \sigma_i(A)+\sigma_K(B), \ i=1,\cdots, K,$
and Lemma \ref{lem_rigidity}, we find that
\begin{equation} \label{lowerboundforeiev}
\mu_{i} \geq \lambda_+-N^{-2/3+C^{\prime}\epsilon_0}, \ i= k^++1, \cdots, r. 
\end{equation}
For the non-outliers, we assume that $S_b>\lambda_++N^{-2/3+C^{\prime}\epsilon_0},$ otherwise the proof is already done. Now we assume $x \notin I_0,$ by (\ref{lem_permissble_subset}) and (\ref{lowerboundforeiev}), we only need to discuss the case when $x \in (\lambda_++N^{-2/3+C^{\prime}\epsilon_0}, \ S_b).$   In this case,  we will prove that $M^r(x)$ is non-singular by comparing with $M^r(z),$ where $z=x+iN^{-2/3-\epsilon_4}$ and $\epsilon_4<\epsilon_1$ is some small positive constant. Denote the spectral decomposition of $G(z)$ as 
\begin{equation*}
G(z) = \sum_{k} \frac{1}{\lambda_k-z} \mathbf{g}_{\alpha} \mathbf{g}_{\alpha}^*, \ \mathbf{g}_{\alpha} \in \mathbb{R}^{M+N}. 
\end{equation*}
Denote $\mathbf{u}_i, i=1,\cdots,2r $ as the $i$-th column in $\mathbf{U}$ defined in (\ref{defn_mathbfuv}) and abbreviate $\mathbf{u}_i^* G(z) \mathbf{u}_j$ as $G_{\mathbf{u}_i  \mathbf{u}_j}(z),$ and $\eta:=N^{-2/3-\epsilon_4},$  using spectral decomposition and the fact $x>\lambda_++N^{-2/3+C^{\prime}\epsilon_0},$ we have 
\begin{align*}
|G_{\mathbf{u}_i  \mathbf{u}_j}(x)-G_{\mathbf{u}_i  \mathbf{u}_j}(x+i\eta)| \leq \operatorname{Im}G_{\mathbf{u}_i  \mathbf{u}_i}(x+i\eta) +\operatorname{Im}G_{\mathbf{u}_j \mathbf{u}_j}(x+i\eta).
\end{align*}
Therefore, by Lemma \ref{lem_locallaw} and \ref{lem_isotropic_outspec}, with $1-N^{-D_1}$ probability, we have
\begin{equation*}
M^r(x)=M^r(z)+O( N^{\epsilon_1}\left(\operatorname{Im}m_{2c}(z)+\sqrt{\frac{\operatorname{Im}m_{2c}(z)}{N\eta}}\right)).
\end{equation*}
Using Lemma \ref{lem_boundsm1m2} and a similar discussion to (\ref{outside_bound}), we have
\begin{equation*}
M^r(x)=T^r(z)+O(N^{-1/3}(N^{-C^{\prime}\epsilon_0/4}+N^{\epsilon_1-C^{\prime}\epsilon_0/4})).
\end{equation*}
By Lemma \ref{lem_boundsm1m2} and \ref{lem_locallaw}, we find that 
$|T^r(z)| \geq N^{-1/3+\frac{C^{\prime}\epsilon_0}{2}},$
where we use the assumption that $x>\lambda_++N^{-2/3+C^{\prime}\epsilon_0}.$ Therefore, $M^r(x)$ is non-singular as we have assumed $2<C^{\prime}<4$. This concludes the proof of (\ref{lem_permissble_bulk}).
\end{proof}

In the second step, we will extend the proof to any configuration $\mathbf{d}(1)$ depending on $N$ using the continuity argument. This is done by a bootstrap argument by choosing a continuous path connecting $\mathbf{d}(0)$ and $\mathbf{d}(1)$. It is recorded as the following
lemma and its proof will be provided in the supplementary material \cite{DBS}.
\begin{lem}\label{lem_step2} For any $N$-dependent configuration $\mathbf{d}(1) \in \mathcal{D}(\epsilon_0),$ (\ref{thm_location_outlier}) and (\ref{thm_location_bulk}) hold true. 
\end{lem}

\end{proof}
\subsection{Singular vectors}\label{section_eve}
In this section, we focus on the local behavior of singular vectors. We will follow the discussion of \cite[Section 5 and 6]{BKYY}.  We first deal with the outlier singular vectors and then the non-outlier ones.  Due to similarity, we only prove (\ref{thm_right}) and (\ref{thm_non_right}),  (\ref{thm_left}) and (\ref{thm_non_left}) can be handled similarly.
\begin{proof}[Proof of (\ref{thm_right})] It is notable that,  by Lemma \ref{lem_isotropic_outspec} and  Theorem \ref{thm_location}, for $i \in \mathcal{O},$  there exists a constant $C>0,$ for $N$ large enough, with $1-N^{-D_1}$ probability , we can choose an event $\Xi$ such that for all $z \in \tilde{\mathbf{D}}(\tau, \epsilon_1)$ defined in (\ref{defn_tilded})
\begin{equation} \label{nonoverlappingcondition1}
\mathbf{1}(\Xi)|(V^{*}\mathcal{G}_2(z)V)_{ij}-m_{2c}(z)\delta_{ij}| \leq (\kappa+\eta)^{-1/4}N^{-1/2+C\epsilon_1}.
\end{equation}
Next we will restrict our discussion on the event $\Xi.$ Recall (\ref{defn_nuai}) and for $A \subset \mathcal{O}$, we define for each $i \in A$ the radius
\begin{equation}\label{defn_rho}
\rho_i:=\frac{\nu_i \wedge (d_i-c^{-1/4})}{2}.
\end{equation}
Under the assumption of  (\ref{defn1_nonoverlapping}), we have (see the equation (5.10) of \cite{BKYY})
\begin{equation}\label{boundrho}
\rho_i \geq \frac{1}{2} (d_i-c^{-1/4})^{-1/2}N^{-1/2+\epsilon_0}.
\end{equation} 
We define the contour $\Gamma:=\partial \Upsilon$  as the boundary of the union of discs $\Upsilon:=\cup_{i \in A} B_{\rho_i}(d_i)$, where $B_{\rho}(d)$ is the open disc of radius $\rho$ around $d.$ We summarize the basic properties of $\Upsilon$ as the following lemma, its proof can be found in  \cite[Lemma 5.4 and 5.5]{BKYY}.  
\begin{lem} \label{lem_contour} Recall (\ref{defn_pd}) and (\ref{defn_tilded}),  we have
$\overline{p(\Upsilon)} \subset \tilde{\mathbf{D}}(\tau, \epsilon_1).$ Moreover, each outlier $\{\mu_i \}_{i \in A}$ lies in $p(\Upsilon),$ and all the other eigenvalues of $\tilde{S}\tilde{S}^*$ lie in the complement of $\overline{p(\Upsilon)}.$
\end{lem}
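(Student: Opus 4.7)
The plan is to verify the two assertions in turn by exploiting the structure of the map $p$ near each $d_i$ and invoking Theorem \ref{thm_location} on the event $\Xi$.

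For part (i), I would use a Taylor expansion of $p$ around $d_i$ for each $i\in A$. Since $c^{-1/4}$ is the global minimum of $p$ by part (1) of Lemma \ref{lem_zm1zm2z} and $p'(x)\sim x-c^{-1/4}$ by (\ref{defn_pprime}), while $p''$ is bounded on the compact range where the $d_i$'s live, for any $w=d_i+\delta$ with $|\delta|\leq \rho_i\leq (d_i-c^{-1/4})/2$ one gets
\[
|p(w)-p(d_i)| \;\lesssim\; (d_i-c^{-1/4})\rho_i+\rho_i^2 \;\lesssim\; (d_i-c^{-1/4})^2 .
\]
Combined with $p(d_i)-\lambda_+=p(d_i)-p(c^{-1/4})\sim(d_i-c^{-1/4})^2\geq N^{-2/3+2\epsilon_0}$, this yields $\operatorname{Re}p(w)\geq\lambda_++c(d_i-c^{-1/4})^2\geq\lambda_++N^{-2/3+\epsilon_1}$ because $\epsilon_0\gg\epsilon_1$. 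The remaining constraints defining $\tilde{\mathbf{D}}(\tau,\epsilon_1)$ follow from $d_i\leq\tau^{-1}-1$ in (\ref{defn_D+}) and the smallness of $\rho_i$; on the portion of $p(\Upsilon)$ that touches the real axis we instead invoke Lemma \ref{lem_realparameter}.

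For the first half of (ii), I must show $\mu_i\in p(B_{\rho_i}(d_i))$ for each $i\in A$. Since $p$ is strictly increasing on $(c^{-1/4},\infty)$ by part (3) of Lemma \ref{lem_zm1zm2z}, the real slice $p((d_i-\rho_i,d_i+\rho_i))$ is an open interval about $p(d_i)$ of half-width at least $c\,\rho_i(d_i-c^{-1/4})$. The lower bound (\ref{boundrho}) and Theorem \ref{thm_location} together show this half-width dominates the location error $|\mu_i-p(d_i)|\leq (d_i-c^{-1/4})^{1/2}N^{-1/2+C\epsilon_0}$, once the relation $C\epsilon_1<\epsilon_0$ and the non-overlapping condition (\ref{defn1_nonoverlapping}) are combined. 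For the second half, I split the other eigenvalues into two groups. If $\mu_k$ is an outlier coming from $d_j\in\mathcal{O}\setminus A$, Theorem \ref{thm_location} places $\mu_k$ near $p(d_j)$, and the non-overlapping condition forces $|d_j-d_i|\geq\nu_j\geq 2\rho_j$ for all $i\in A$, which after the $p$-image and a Taylor estimate translates into $\operatorname{dist}(\mu_k,\overline{p(B_{\rho_i}(d_i))})$ exceeding the location error. If $\mu_k$ is a non-outlier eigenvalue, then by (\ref{thm_location_bulk}) it lies within $N^{-2/3+C\epsilon_0}$ of $\lambda_+$, while paragraph 1 shows that $\overline{p(\Upsilon)}$ sits at distance at least $cN^{-2/3+2\epsilon_0}$ from $\lambda_+$, which is a larger scale.

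The main technical obstacle will be the delicate bookkeeping of the exponents $\epsilon_0,\epsilon_1$ and the constant $C$ in the borderline regime $d_i-c^{-1/4}\sim N^{-1/3+\epsilon_0}$, where both the disc radius $\rho_i$ and the location error from Theorem \ref{thm_location} approach their $N$-dependent thresholds simultaneously. One has to ensure that $p(B_{\rho_i}(d_i))$ is simultaneously large enough to contain $\mu_i$ and small enough to miss all $\mu_k$ associated with indices $j\neq i$, and this is precisely what the non-overlapping condition (\ref{defn1_nonoverlapping}) in conjunction with the assumption $C\epsilon_1<\epsilon_0$ is designed to provide; all other parts of the argument are straightforward Taylor-expansion estimates and applications of Theorem \ref{thm_location}.
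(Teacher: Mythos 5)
Your route is the intended one: the paper does not actually prove Lemma \ref{lem_contour} but cites \cite[Lemmas 5.4 and 5.5]{BKYY}, and your sketch reproduces exactly that argument — Taylor expansion of $p$ about each $d_i$ on discs of radius $\rho_i\leq (d_i-c^{-1/4})/2$ using $p'(x)\sim x-c^{-1/4}$, the eigenvalue location theorem on the event $\Xi$, and the split of the remaining spectrum into other outliers (handled through the non-overlapping condition) and bulk eigenvalues (handled through (\ref{thm_location_bulk}) and rigidity). So there is no disagreement of approach.

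Two of your justifications, however, do not close as written. First, in (i) you bound $|p(w)-p(d_i)|\lesssim (d_i-c^{-1/4})^2$ and set it against $p(d_i)-\lambda_+\sim (d_i-c^{-1/4})^2$; since both are of the same order, an unspecified-constant upper bound cannot by itself give $\operatorname{Re}p(w)\geq \lambda_++c(d_i-c^{-1/4})^2$. The repair is mechanical but must be done: use the exact identity $p(w)-\lambda_+=(w^2-c^{-1/2})^2/w^2$ together with $\rho_i\leq (d_i-c^{-1/4})/2$, which keeps $\arg\bigl(w-c^{-1/4}\bigr)$ bounded away from $\pm\pi/2$ and hence keeps $\operatorname{Re}\bigl[(w^2-c^{-1/2})^2/w^2\bigr]\gtrsim (d_i-c^{-1/4})^2$ on the whole disc. (Also, $\overline{p(\Upsilon)}$ meets the real axis, so the inclusion into $\tilde{\mathbf{D}}(\tau,\epsilon_1)\subset\mathbb{C}^+$ only makes sense up to the closure of that domain; Lemma \ref{lem_realparameter} is a Green-function bound, not what settles this set-theoretic point.) Second, in (ii) the mechanism you invoke for $\mu_i\in p(B_{\rho_i}(d_i))$ is misidentified. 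In the borderline regime $d_i-c^{-1/4}\sim N^{-1/3+\epsilon_0}$ with $\rho_i=(d_i-c^{-1/4})/2$ (where the non-overlapping condition is not the binding constraint at all), the half-width of $p\bigl((d_i-\rho_i,d_i+\rho_i)\bigr)$ is of order $(d_i-c^{-1/4})^2$, so you need the location error to satisfy $N^{-1/2+(\text{error exponent})}(d_i-c^{-1/4})^{1/2}\ll (d_i-c^{-1/4})^2$, i.e. the error exponent must be at most a small multiple of $\epsilon_0$. Taking Theorem \ref{thm_location} at face value with the bound $N^{-1/2+C\epsilon_0}$, $C>1$ large, this fails; what saves the statement is the sharper error $N^{-1/2+\epsilon_3}$ with $\epsilon_3<\epsilon_0/2$ as in (\ref{conditionepsilon3}), which is what the proof of Theorem \ref{thm_location} actually produces, and neither the relation $C\epsilon_1<\epsilon_0$ nor (\ref{defn1_nonoverlapping}) substitutes for it. The same constant-versus-exponent care is needed when you exclude the outliers indexed by $\mathcal{O}\setminus A$, since there $|p(d_i)-p(d_j)|$ and the diameter of $p(B_{\rho_i}(d_i))$ are again of the same order and only the factor $\rho_i\leq\nu_i/2$ plus the small error exponent separate them.
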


Armed with the above results, we now start the proof of the outlier singular vectors. Our starting point is an integral representation of the singular vectors.  By (\ref{green2}), we have
\begin{equation}\label{rightsingularvector}
v_i^{*}\tilde{\mathcal{G}}_2v_j=\mathbf{v}_{i}^{*}\tilde{G} \mathbf{v}_{j}, 
\end{equation}
where $\mathbf{v}_i \in \mathbb{R}^{M+N}$ is the natural embedding of $v_i$ with $\mathbf{v}_i=(0, v_i)^*$. Recall (\ref{defn_projection}), using the spectral decomposition of $\tilde{\mathcal{G}}_2(z),$ Lemma \ref{lem_contour} and Cauchy's integral formula, we have
\begin{equation} \label{eigenvectorrepresentation}
\mathbf{P}_{r} =-\frac{1}{2 \pi i}\int_{p(\Gamma)} \tilde{\mathcal{G}}_2(z)dz=-\frac{1}{2 \pi i} \int_{\Gamma} \mathcal{\tilde{G}}_2(p(\zeta))p^{\prime}(\zeta)d\zeta.
\end{equation}
By Lemma   \ref{lem_gtitle},  Cauchy's integral formula, (\ref{rightsingularvector}) and (\ref{eigenvectorrepresentation}), we have
\begin{align}\label{intergralrepresentationv}
<v_i, \mathbf{P}_rv_j> = \frac{1}{2 d_i d_{j} \pi i} \int_{p(\Gamma)} (\mathbf{D}^{-1}+\mathbf{U}^*G(z)\mathbf{U})^{-1}_{i j} \frac{dz}{z},
\end{align}
where $\bar{i}, \ \bar{j}$ are defined as $\bar{i}:=r+i, \ \bar{j}:=r+j.$ Recall (\ref{convergentlimit}), as $\mathbf{D}^{-1}+\mathbf{U}^*\Pi(z)\mathbf{U}$ is of finite dimension, by Lemma \ref{lem_locallaw}, \ref{lem_isotropic_outspec},  (\ref{relationm1m2}) and (\ref{nonoverlappingcondition1}), we can now use $\Pi(z)$ as
\begin{equation*}
\Pi(z):=\left( {\begin{array}{*{20}c}
   m_{1c}(z)& 0  \\
   0 & {m_{2c}(z)}  \\
   \end{array}} \right).
\end{equation*}
Next we decompose $\mathbf{D}^{-1}+\mathbf{U}^*G(z)\mathbf{U}$ by
\begin{equation} \label{denumeratordecomposition}
\mathbf{D}^{-1}+\mathbf{U}^*G(z)\mathbf{U}=\mathbf{D}^{-1}+\mathbf{U}^*\Pi(z)\mathbf{U}-\Delta(z), \ \Delta(z)=\mathbf{U}^*\Pi(z)\mathbf{U}-\mathbf{U}^*G(z)\mathbf{U}.
\end{equation}
It is notable that $\Delta(z)$ can be controlled by  Lemma \ref{lem_locallaw} and \ref{lem_isotropic_outspec}. Using the resolvent expansion to the order of one on (\ref{denumeratordecomposition}),  we have 
\begin{equation}\label{sij_decomposition}
<v_i, \mathbf{P}_r v_j>=\frac{1}{d_i d_j}(S^{(0)}+S^{(1)}+S^{(2)}),
\end{equation}
where 
\begin{equation*}
S^{(0)}:= \frac{1}{2 \pi i} \int_{p(\Gamma)}(\frac{1}{\mathbf{D}^{-1}+\mathbf{U}^*\Pi(z)\mathbf{U}})_{i j}\frac{dz}{z},
\end{equation*} 
\begin{equation*}
S^{(1)}=\frac{1}{2 \pi i} \int_{p(\Gamma)}[\frac{1}{\mathbf{D}^{-1}+\mathbf{U}^*\Pi(z)\mathbf{U}}\Delta(z)\frac{1}{\mathbf{D}^{-1}+\mathbf{U}^*\Pi(z)\mathbf{U}}]_{i j}\frac{dz}{z},
\end{equation*}
\begin{equation*}\label{defn_sij2}
S^{(2)}=\frac{1}{2 \pi i} \int_{p(\Gamma)}[\frac{1}{\mathbf{D}^{-1}+\mathbf{U}^*\Pi(z)\mathbf{U}}\Delta(z)\frac{1}{\mathbf{D}^{-1}+\mathbf{U}^*\Pi(z)\mathbf{U}}\Delta(z)\frac{1}{\mathbf{D}^{-1}+\mathbf{U}^*G(z)\mathbf{U}}]_{i j}\frac{dz}{z}.
\end{equation*}
By an elementary computation, we have
\begin{equation} \label{biginverse}
(\mathbf{D}^{-1}+\mathbf{U}^*\Pi(z)\mathbf{U})^{-1}_{i j}=
\begin{cases}
\delta_{ij}\frac{zm_{2c}(z)}{z m_{1c}(z)m_{2c}(z)-d_i^{-2}}, & 1 \leq i, j \leq r; \\
\delta_{ij}\frac{zm_{1c}(z)}{z m_{1c}(z)m_{2c}(z)-d_i^{-2}}, & r \leq i, j \leq 2r; \\
\delta_{\bar{i} j} (-1)^{i+j} \frac{z^{1/2}d_{i}^{-1}}{zm_{1c}(z)m_{2c}(z)-d_{i}^{-2}},& 1 \leq i \leq r, \ r \leq j \leq 2r ;\\
\delta_{i \bar{j}}(-1)^{i+j} \frac{z^{1/2}d_j^{-1}}{zm_{1c}(z)m_{2c}(z)-d_j^{-2}} , & r \leq i \leq 2r, \ 1 \leq j \leq r. 
\end{cases}
\end{equation}
Using the fact $ p_i m_{1c}(p_i)m_{2c}(p_i)=\frac{1}{d_i^2}$ and the residual theorem, we have
\begin{equation}\label{s0boundfinal}
S^{(0)}= \delta_{ij} \frac{m_{2c}(p_i)}{\mathcal{T}^{\prime}(p_i)}=\delta_{ij}\frac{d_i^4-c^{-1}}{d_i^2+1}.
\end{equation}
Next we control the term $S^{(1)}.$ Applying (\ref{biginverse}) on $S^{(1)}$, we have 
\begin{equation} \label{defn_sij1}
S^{(1)}=\frac{1}{2 \pi i} \int_{p(\Gamma)} \frac{f(z)}{(zm_{1c}(z)m_{2c}(z)-d_i^{-2})(zm_{1c}(z)m_{2c}(z)-d_j^{-2})}dz, 
\end{equation}
where $f(z)=f_1(z)+f_2(z)$ and $f_{1,2}(z)$ are defined as
\begin{equation*}
f_1(z):=m_{2c}(z)[zm_{2c}(z)\Delta(z)_{ij}+(-1)^{i+\bar{i}}z^{1/2}d_i^{-1}\Delta(z)_{\bar{i} j}],
\end{equation*}
\begin{equation*}
f_2(z):=d_j^{-1}[(-1)^{j+\bar{j}}z^{1/2}m_{2c}(z)\Delta(z)_{i\bar{j}}+(-1)^{i+j+\bar{i}+\bar{j}}d_i^{-1}\Delta(z)_{\bar{i}\bar{j}}].
\end{equation*}
We now use the change of variable as in (\ref{eigenvectorrepresentation}) and rewrite $S^{(1)}$ as
\begin{equation*}
S^{(1)}=\frac{1}{2 \pi i} \int_{\Gamma} \frac{f(p(\zeta))}{(\zeta^{-2}-d_i^{-2})(\zeta^{-2}-d_j^{-2})}p^{\prime}(\zeta)d\zeta=d_i^2 d_j^2\frac{1}{2\pi i} \int_{\Gamma} \frac{f(p(\zeta))\zeta^4}{(d_i^2-\zeta^2)(d_j^2-\zeta^2)} p^{\prime}(\zeta)d\zeta,
\end{equation*}
where we use the fact $p(\zeta)m_{1c}(p(\zeta))m_{2c}(p(\zeta))=\zeta^{-2}.$ 
By (\ref{defn_pprime}),  Lemma \ref{lem_boundsm1m2} and \ref{lem_isotropic_outspec}, we conclude that
\begin{equation} \label{fderivativebound}
| f(p(\zeta))p^{\prime}(\zeta) \zeta^4| \leq (\zeta-c^{-1/4})^{1/2}N^{-1/2+\epsilon_1}.
\end{equation} 
Denote 
\begin{equation*}
f_{ij}(\zeta)=\frac{f(p(\zeta))p^{\prime}(\zeta) \zeta^4}{(d_i+\zeta)(d_j+\zeta)}.
\end{equation*}
As $f_{ij}$ is holomorphic inside the contour $\Gamma,$ by Cauchy's differentiation formula, we have
\begin{equation}\label{fcauchydiff}
f_{ij}^{\prime}(\zeta)=\frac{1}{2 \pi i} \int_{\mathcal{C}} \frac{f_{ij}(\xi)}{(\xi-\zeta)^2} d\xi, 
\end{equation}
where the contour $\mathcal{C}$ is the circle of radius $\frac{|\zeta-c^{-1/4}|}{2}$ centered at $\zeta.$ Hence, by (\ref{defn_pprime}), (\ref{fderivativebound}), (\ref{fcauchydiff}) and the residual theorem,  we have
\begin{equation}\label{fijderivative}
|f^{\prime}_{ij}(\zeta)| \leq (\zeta-c^{-1/4})^{-1/2} N^{-1/2+\epsilon_1}.
\end{equation}
In order to estimate $S^{(1)},$ we consider the following three cases (i) $i, j \in A, $ (ii) $i \in A, j \notin A,$ (or $i \notin A, \ j \in A$), (iii) $i, j \notin A $. By the residual theorem,  $S^{(1)}=0$ when case (iii) happens. Hence, we only need to consider the cases (i) and (ii).  For the case (i),  when $i \neq j$, by the residual theorem and (\ref{fijderivative}), we have
\begin{equation*}
|S^{(1)}|= d^2_i d^2_j  \left | \frac{f_{ij}(d_i)-f_{ij}(d_j)}{d_i-d_j} \right | \leq \frac{d^2_i d^2_j }{|d_i-d_j|} \left | \int_{d_i}^{d_j} |f^{\prime}_{ij}(t)|dt \right | \leq \frac{d^2_i d^2_jN^{-1/2+\epsilon_1} }{(d_i-c^{-1/4})^{1/2}+(d_j-c^{-1/4})^{1/2}} . 
\end{equation*}
When $i=j$, by the residual theorem, we have
$|S^{(1)}| \leq  d_i^4  (d_i-c^{-1/4})^{-1/2}N^{-1/2+\epsilon_1}. $ For the case (ii), when $i \in A,  j \notin A$, by the residual theorem and (\ref{nonoverlappingcondition1}), we have 
\begin{equation*}
|S^{(1)}|=|\frac{d^2_i d^2_j f_{ij}(d_i)}{d_i-d_j}| \leq  \frac{d^2_i d^2_j (d_i-c^{-1/4})^{1/2}}{|d_i-d_j|}N^{-1/2+\epsilon_1}.
\end{equation*}
We can get similar results when $i \notin A, j \in A.$ Putting all the cases together,  we find that
\begin{align} \label{sij1boundfinal}
|S^{(1)}|\leq N^{-1/2+\epsilon_1} \left[ \frac{\mathbf{1}(i \in A, j \in A) d^2_i d^2_j}{(d_i-c^{-1/4})^{1/2}+(d_j-c^{-1/4})^{1/2}} + \mathbf{1}(i \in A, j \notin A)\frac{d^2_i d^2_j (d_i-c^{-1/4})^{1/2}}{|d_i-d_j|} \nonumber \right. \\
 \left.  +  \mathbf{1}(i \notin A, j \in A)\frac{d^2_i d^2_j (d_j-c^{-1/4})^{1/2}}{|d_i-d_j|} \right].
\end{align}
Finally, we need to estimate $S^{(2)}.$ Here the residual calculations can not be applied directly as $\mathbf{U}^*G(z)\mathbf{U}$ is not necessary to be diagonal and  a relation comparable to $p(\zeta)m_{1c}(p(\zeta))m_{2c}(p(\zeta))=\zeta^{-2}$ does not exist.  Instead, we need to precisely choose  the contour $\Gamma.$ We record the result as the following lemma, whose proofs will be given in the supplementary material \cite{DBS}. 
\begin{lem}\label{lem_sij2bound} When $N$ is large enough, with $1-N^{-D_1}$ probability, for some constant $C>0,$ we have 
\begin{equation}\label{sij2boundfinal}
|S^{(2)}| \leq CN^{-1+2\epsilon_1}(\frac{1}{\nu_i}+\frac{\mathbf{1}(i \in A)}{|d_i-c^{-1/4}|})(\frac{1}{\nu_j}+\frac{\mathbf{1}(j \in A)}{|d_j-c^{-1/4}|}).
\end{equation} 
\end{lem}

Therefore, plugging (\ref{s0boundfinal}), (\ref{sij1boundfinal}) and (\ref{sij2boundfinal}) into (\ref{sij_decomposition}), we conclude the proof of (\ref{thm_right}). Before concluding this subsection, we briefly discuss the proof of (\ref{thm_left}).  By Lemma   \ref{lem_gtitle} and Cauchy's integral formula , we have
\begin{align*}
<u_i, \mathbf{P}_l  u_j> =\frac{1}{2 d_i d_{j} \pi i} \int_{p(\Gamma)} (D^{-1}+\mathbf{U}^*G(z)\mathbf{U})^{-1}_{\bar{i} \bar{j}} \frac{dz}{z}.
\end{align*}
Then we can use a similar discussion as (\ref{sij_decomposition}), computing the convergent limit from $S^{(0)}$ and controlling the bounds for $S^{(1)}$ and $S^{(2)}.$  We remark that the convergent limit is different because we  use  $(\mathbf{D}^{-1}+\mathbf{U}^* \Pi(z) \mathbf{U})_{ij}, r \leq i,j \leq 2r$ in  (\ref{biginverse}), which results in 
\begin{equation*}
S^{(0)}=\delta_{ij}\frac{m_{1c}(p_i)}{\mathcal{T}^{\prime}(p_i)}=\delta_{ij}\frac{d_i^4-c^{-1}}{d_i^2+c^{-1}}.
\end{equation*}
This concludes the proof of (\ref{thm_left}).
\end{proof}

For the non-outliers, the proof strategy for the outlier singular vectors will not work as we cannot use the residual theorem. We will use a spectral decomposition for our proof.  
\begin{proof}[Proof of (\ref{thm_non_right})]  Denote 
\begin{equation}\label{est_defn_eta}
z=\mu_j+i\eta,
\end{equation}
where $\eta$ is defined as the smallest solution of
\begin{equation}\label{defn_etamm}
\operatorname{Im} m_{2c}(z)=N^{-1+6\epsilon_1}\eta^{-1}.
\end{equation}
As we assume $j \leq (1-\tau)K$ or $c \neq 1,$ we conclude that $|z|$ has a constant lower bound. Therefore, by Lemma \ref{lem_anisotropic}, \ref{lem_locallaw}  and \ref{lem_isotropic_outspec}, with $1-N^{-D_1}$ probability,  we have
\begin{equation}\label{outlier_anisotropic}
|<\mathbf{u}, \underline{\Sigma}^{-1}(G(z)-\Pi(z))\underline{\Sigma}^{-1}\mathbf{v}>| \leq \frac{N^{4\epsilon_1}}{N \eta}.
\end{equation}
Recall (\ref{defn_kappa}), abbreviating $\kappa=|\mu_j-\lambda_+|, $ by Lemma \ref{lem_boundsm1m2} and (\ref{thm_location_bulk}), we find that (see \cite[(6.5) and (6.6)]{BKYY})
\begin{equation}\label{estimationofeta}
\eta \sim
\begin{cases}
\frac{N^{6\epsilon_1}}{N \sqrt{\kappa}+N^{2/3+2\epsilon_1}},& \text{if} \ \mu_j \leq \lambda_++N^{-2/3+4\epsilon_1}, \\
N^{-1/2+3\epsilon_1}\kappa^{1/4}, & \text{if} \ \mu_j\geq \lambda_++N^{-2/3+4\epsilon_1}.
\end{cases}.
\end{equation}
For $z$ defined in (\ref{est_defn_eta}),  by the spectral decomposition, we have 
\begin{equation}\label{spec_nonout}
<v_i, \tilde{v}_j>^2 \leq \eta <v_i, \operatorname{Im} \tilde{\mathcal{G}}_2(z) v_i>=\eta <\mathbf{v}_i, \operatorname{Im}\tilde{G}(z) \mathbf{v}_i>,
\end{equation}  
where $\mathbf{v}_i \in \mathbb{R}^{M+N}$ is the natural embedding of $v_i.$  By Lemma \ref{lem_gtitle}, we have 
\begin{equation*}
<\mathbf{v}_i, \tilde{G}(z)\mathbf{v}_i>=-\frac{1}{zd_i^2}(\mathbf{D}^{-1}+\mathbf{U}^* G(z)\mathbf{U})^{-1}_{ii}.
\end{equation*}
Similar to (\ref{sij_decomposition}), using a simple resolvent expansion and  (\ref{biginverse}) , we have
\begin{align} \label{bigexpansionnonoutlier}
<\mathbf{v}_i, & \tilde{G}(z)\mathbf{v}_i>  \nonumber  \\
= -\frac{1}{zd_i^2} & [ \frac{zm_{2c}(z)}{z m_{1c}(z)m_{2c}(z)-d^{-2}_{i}}+\frac{zf(z)}{(zm_{1c}(z)m_{2c}(z)-d_i^{-2})^2} \nonumber \\
& +  \left( [ (\mathbf{D}^{-1}+\mathbf{U}^*\Pi(z) \mathbf{U})^{-1} \Delta(z) ]^2 (\mathbf{D}^{-1}+ \mathbf{U}^*G(z)\mathbf{U})^{-1} \right)_{ii}] ,
\end{align}
where $f(z)$ is defined in (\ref{defn_sij1}). To estimate the right-hand side of (\ref{bigexpansionnonoutlier}), we  use the following error estimate
\begin{equation*}
\min_j |d_j^{-2}-\mathcal{T}(z)| \geq \operatorname{Im} \mathcal{T}(z) \sim \operatorname{Im} m_{2c}(z)=\frac{N^ {6\epsilon_1}}{N \eta} \gg \frac{N^{4\epsilon_1}}{N \eta} \geq |\Delta(z)|, 
\end{equation*}
where we use (\ref{outlier_anisotropic}) and Lemma \ref{lem_locallaw}. By a similar resolvent expansion, there exists some constant $C>0,$ such that
\begin{equation*}
\left | \left | \frac{1}{\mathbf{D}^{-1}+\mathbf{U}^*G(z)\mathbf{U}}  \right | \right | \leq \frac{C}{\operatorname{Im} m_{2c}(z)}=CN^{1-6\epsilon_1}\eta.
\end{equation*}
We therefore get from (\ref{bigexpansionnonoutlier}), the definition of $f$ and (\ref{outlier_anisotropic}) that
\begin{equation} \label{bound_gvv}
<\mathbf{v}_i, \tilde{G}(z)\mathbf{v}_i> =\frac{m_{2c}(z)}{1-d_i^2\mathcal{T}(z)}+O(\frac{d_i^2}{|1-d_i^2 \mathcal{T}(z)|^2} \frac{N^{4\epsilon_1}}{N \eta}).
\end{equation}
By (\ref{spec_nonout}), we have
\begin{align}\label{boundform}
<v_i, \tilde{v}_j>^2 \leq \frac{\eta}{|1-d^2_i\mathcal{T}(z)|^2} \left[ \operatorname{Im} m_{2c}(z) (1-d^2_ic^{1/2}+ \operatorname{Re}(d_i^2c^{1/2}-d_i^2 \mathcal{T}(z))) \right. \nonumber \\
\left. +d_i^2\operatorname{Re} m_{2c}(z) \operatorname{Im}\mathcal{T}(z)+\frac{Cd_i^2 N^{4\epsilon_1}}{N\eta} \right].
\end{align}
By (\ref{lem_realboundt}), (\ref{defn_etamm}) and (\ref{estimationofeta}), we have 
\begin{align*}
\operatorname{Im} m_{2c}(z)[(1-d_i^2c^{1/2})+& \operatorname{Re}(d_i^2c^{1/2}-d_i^2 \mathcal{T}(z))] \\
& \leq \frac{C N^{6 \epsilon_1}}{N \eta}\left(|d_i-c^{-1/4}|+\max\{\sqrt{\kappa+\eta}, \frac{\eta}{\sqrt{\kappa+\eta}}+\kappa\}\right).
\end{align*}
For the other item, by Lemma \ref{lem_boundsm1m2}, we have $|\operatorname{Re}m_{2c}(z) \operatorname{Im} \mathcal{T}(z)| \sim \operatorname{Im}m_{2c}(z).$ Putting all these estimates together, we have
\begin{equation*}
<v_i, \tilde{v}_j>^2 \leq \frac{CN^{6\epsilon_1}}{N |1-d_i^2\mathcal{T}(z)|^2}.
\end{equation*}
The rest of the proof leaves to give an estimate of  $1-d_i^2\mathcal{T}(z).$ We summarize it as the following lemma and put its proof in the supplementary material \cite{DBS}.
\begin{lem}\label{lem_bound1dit}
Recall (\ref{mp_density}), for all $\mu_j \in [\lambda_{-}, \lambda_{+}+N^{-2/3+C\epsilon_0}], $ there exists a constant $\delta>0,$ such that 
\begin{equation*}
|1-d_i^2\mathcal{T}(z)| \geq \delta d_i^2(|d_i^{-2}-c^{1/2}|+\operatorname{Im} \mathcal{T}(z)).
\end{equation*} 
\end{lem}
Therefore, we have 
\begin{equation*}
<v_i, \tilde{v}_j>^2 \leq \frac{N^{C\epsilon_0}}{N((d_i-c^{-1/4})^2+\kappa^d_j)}, \ \kappa_j^d:=N^{-2/3}(j \wedge (K+1-j))^{2/3},
\end{equation*}
where we use the fact that $\operatorname{Im} \mathcal{T}(z) \geq c\sqrt{\kappa_j^d}$ (see the equation (6.14) of  \cite{BKYY}).  This concludes the proof of (\ref{thm_non_right}). For the proof of  (\ref{thm_non_left}),  we will use the spectral decomposition
\begin{equation*}
<u_i, \tilde{u}_j>^2 \leq \eta <u_i, \operatorname{Im} \tilde{\mathcal{G}}_1(z) u_i>=\eta <\mathbf{u}_i, \operatorname{Im}\tilde{G}(z) \mathbf{u}_i>,
\end{equation*}  
and
\begin{equation*} 
<\mathbf{u}_i, \tilde{G}(z)\mathbf{u}_i>=-\frac{1}{zd_i^2}(\mathbf{D}^{-1}+\mathbf{U}^* G(z)\mathbf{U})^{-1}_{\bar{i} \bar{i}}.
 \end{equation*}
Then by the resolvent expansion similar to (\ref{bigexpansionnonoutlier}) and control the items using Lemma \ref{lem_boundsm1m2},  \ref{lem_anisotropic}, \ref{lem_locallaw} and \ref{lem_isotropic_outspec}, we can conclude the proof. 
\end{proof}

\paragraph{Acknowledgements.}
The author would like to thank  Zhigang Bao, Jeremy Quastel, B{\' a}lint Vir{\' a}g, Ke Wang and Zhou Zhou for fruitful discussions
and valuable suggestions, which have greatly improved the paper. The author  is also grateful to an anonymous referee, the associated editor and editor for providing detailed and constructive suggestions and comments, which have improved the paper significantly.


\paragraph{Supplementary material}\label{supplement}

{\bf{Supplement to "High dimensional deformed rectangular matrices with applications in matrix denoising"}}: This supplementary
material contains auxiliary lemmas and proofs of  Proposition \ref{prop_choiseoutlier}, Theorem \ref{thm_sparse} and \ref{thm_rie} , Lemma \ref{lem_zm1zm2z}, \ref{lem_derivativebound}, \ref{lem_gtitle}, \ref{lem_step2}, \ref{lem_sij2bound} and \ref{lem_bound1dit}.

\section*{Supplement to "High dimensional deformed rectangular matrices with applications in matrix denoising"}

\begin{proof}[Proof of Proposition 3.3] For $i \leq k^+,$ under the assumptions of Theorem 2.2 and  Assumption 3.2 of the paper, with $1-o(1)$ probability, we have that 
\begin{equation*}
\frac{\mu_i}{\mu_{i+1}}-1=\frac{\mu_i-\mu_{i+1}}{\mu_{i+1}}=O\Big(p(d_i)-p(d_{i+1})+N^{-1/2+C\epsilon_0} \Big).
\end{equation*}
Using Assumption 3.2 of the paper, with $1-o(1)$ probability,  we have that 
\begin{equation}\label{eq_r1}
\frac{\mu_i}{\mu_{i+1}}-1=O(1), \ i \leq k^+.
\end{equation}
And for $i=k^++1,$ with $1-o(1)$ probability, we have that
\begin{equation}\label{eq_r2}
\frac{\mu_i}{\mu_{i+1}}-1=O(N^{-2/3+C\epsilon_0}).
\end{equation} 
By definition, we have that
\begin{align}\label{eq_r3}
\mathbb{P}(q=k^+)&=\mathbb{P} \left( \bigcap_{1 \leq j \leq k^+} \{ \mathcal{R}_j>1+\tau \} \cap \{\mathcal{R}_{k^++1} \leq 1+\tau \}  \right) \nonumber \\
& \geq 1-\sum_{j=1}^{k+} \mathbb{P}(\mathcal{R}_j \leq 1+\tau)-\mathbb{P}(\mathcal{R}_{k^++1}>1+\tau).
\end{align}
Under Assumption 3.2 and the fact that $\epsilon_0$ is sufficiently small, for $\tau=O(N^{-2/3+(C+1)\epsilon_0}),$ we can conclude our proof using (\ref{eq_r1}), (\ref{eq_r2}) and (\ref{eq_r3}).
\end{proof}

\begin{proof}[Proof of Theorem 3.4]
Denote $S_1=\sum_{i=1}^{k^+}d_iu_iv^*_i, \ S_2=\sum_{i=k^++1}^{r}d_iu_iv^*_i,$  we have
\begin{equation*}
|| \hat{S}-S ||_F \leq ||\hat{S}-S_1 ||_F+\sqrt{\sum_{i=k^++1}^r d_i^2}.
\end{equation*}
It is easy to check that 
\begin{equation} \label{decomposeerror}
||\hat{S}-S_1||_F^2 \leq 2\sum_{i=1}^{k^+} (\hat{d}_i-d_i)^2+2 \operatorname{Tr} \left( RR^* \right),
\end{equation}
where $R$ is defined as  $R:=\sum_{i=1}^{k^+} \hat{d}_i \hat{u}_i \hat{v}_i^*-\sum_{i=1}^{k^+} \hat{d}_i u_i v_i^*.$ The first term on the right-hand side of (\ref{decomposeerror}) is bounded by $N^{-1+C\epsilon_0}$ using equation (2.8) of the paper.
For the second term, we only need to control $\operatorname{Tr}((\hat{v}_i-v_i)(\hat{v}_i-v_i)^*)$ and $\operatorname{Tr}((\hat{u}_i-u_i)(\hat{u}_i-u_i)^*)$   by Cauchy-Schwarz inequality. Due to similarity, we only prove for the right singular vectors.

Under the sparsity assumption, the non-zero entries of $S$ are confined on a block matrix $S_b$ of some fixed dimension $m \times n.$ Denote $\hat{S}_b:=S_b+X_b,$  if our algorithm can correctly choose the positions of the non-zero entries of $u_i, v_i$ (i.e. $\hat{S}_b$) with $1-o(1)$ probability, we can conclude our proof using the  fact (see \cite[Lemma 4.3]{SS})
\begin{equation*}
V_b=\hat{V}_b+O(||X_b^*X_b+S_b^*X_b+X_b^* S_b ||_F),
\end{equation*}
where $V_b, \hat{V}_b$ are the right singular vectors of $S_b, \hat{S}_b$ respectively. Therefore,
under the assumption that $x_{ij}$ is of variance $1/N, $ we have that with $1-o(1)$ probability, $V_b=\hat{V}_b+O(N^{-1/2+C\epsilon_0}).$ This concludes our proof. 

The rest of the proof leaves to show that equation (3.3) of the paper can correctly find the positions of the non-zero entries (i.e. $\hat{S}_b$) with $1-o(1)$ probability, which is summarized as the following lemma and we will put its proof in the supplementary material.
\begin{lem}\label{rem_kmeans} For $i=1,2,\cdots, k^+,$ denote $\mathcal{J}_i$ as the index set of the non-zero entries of $v_i,$ for some constant $C>0,$ there exists some $\delta \in (C\epsilon_0, \frac{1}{2}),$ with $1-o(1)$ probability, we have 
\begin{equation*}
|\tilde{v}_i(k)| \geq N^{-1/2+\delta}, \ k \in \mathcal{J}_i; \  |\tilde{v}_i(k)| \leq N^{-1/2+C\epsilon_0}, \ k \in \mathcal{J}_i^c \cap \{1,\cdots,N\}.
\end{equation*}
\end{lem}
\noindent By Lemma \ref{rem_kmeans}, we have that with $1-o(1)$ probability, $\max_{k_1 \notin \mathcal{J}_i} |\tilde{v}_i(k_1)| \ll \min_{k_2 \in \mathcal{J}_i}|\tilde{v}_i(k_2)|,$ which implies that Algorithm 1 can correctly recover the sparse structure of the singular vectors.
Finally, we prove Lemma \ref{rem_kmeans}.
\begin{proof}[Proof of Lemma \ref{rem_kmeans}]
For definiteness, we assume that $<v_i, \tilde{v}_i>$ is non-negative.
By (3.9) of the paper, it is easy to check that with $1-o(1)$ probability, we have 
\begin{equation}\label{key_representation}
\mu_i \tilde{v}_i=X^*X \tilde{v}_i+d^2_i\sqrt{a_2(d_i)}v_i+O(N^{-1/2+C\epsilon_0}),
\end{equation}
where we use the fact that $S$ is sparse and Markov inequality. When $k \in \mathcal{J}_i,$ assume that $|\tilde{v}_i(k)|\leq N^{-1/2+C\epsilon_0},$ using (\ref{key_representation}) and Markov inequality, we conclude that 
\begin{equation*}
\mu_i \tilde{v}_i(k)=d_i^2 \sqrt{a_2(d_i)}v_i(k)+O(N^{-1/2+C\epsilon_0}),
\end{equation*}
which is a contradiction. Hence, for all $k \in \mathcal{J}_i,$ we have $|\tilde{v}_i(k)|>N^{-1/2+C\epsilon_0}.$  When $k \notin \mathcal{J}_i, $ (\ref{key_representation}) reads as 
\begin{equation} \label{key_rep2}
\mu_i \tilde{v}_i(k)=(X^*X \tilde{v}_i)(k)+O(N^{-1/2+C\epsilon_0}),
\end{equation}
where we use Definition 2.1 of the paper.  Assume that $|\tilde{v}_i(k)|>N^{-1/2+C\epsilon_0},$ denote $X^J$ as the minor of $X$ by deleting the $j$-th columns with $j \in \mathcal{J}_i $  and  $\tilde{v}_{i}^{J}$ as the subvector of $\tilde{v}_i$ by deleting the entries with indices in $\mathcal{J}_i.$   As $|\mathcal{J}_i|=O(1),$ by (\ref{key_rep2}), with $1-o(1)$ probability, we have 
\begin{equation*}
\mu_i \tilde{v}^J_i(k)=((X^J)^*X^J \tilde{v}^J_i)(k)+O(N^{-1/2+C\epsilon_0}).
\end{equation*}
This yields that
\begin{equation*}
\frac{1}{||\tilde{v}^J_i||_2^2}(\tilde{v}_i^J)^*(X^J)^* X^J  \tilde{v}^J_i \rightarrow \mu_i.
\end{equation*}
Using Rayleigh quotient and the continuity of eigenvalues, when $N$ is large enough, we conclude that with $1-o(1)$ probability
\begin{equation*}
\lambda_1((X^J)^* X^J) \geq \mu_i,
\end{equation*} 
which is a contradiction by (2.8) of the paper. Here we use the fact 
 $(X^J)^* X^J$ is a $|\mathcal{J}^c_i| \times |\mathcal{J}^c_i|$ sample covariance matrix satisfying the MP law. Hence, for all $k \notin \mathcal{J}_i,$ we have $|\tilde{v}_i(k)|\leq N^{-1/2+C\epsilon_0}.$
\end{proof} 

\end{proof}

\begin{proof}[Proof of Theorem 3.5] We start with the proof of (1). The consistency of $\hat{\eta}_k$ is an immediate result of \cite[Theorem 2.9]{BGN}. For the convergent rate, by definition
\begin{equation*}
\eta_k=\sum_{k_1=1}^{k^+} d_{k_1}\mu_{k_1 k} \nu_{k_1 k}+\sum_{k_1=k^++1}^{r}d_{k_1}\mu_{k_1 k} \nu_{k_1 k}.
\end{equation*} 
Hence, the proof follows from Theorem 2.2 and 2.3 of the paper. Next, we prove (2).  Using a similar discussions to equations (3.9) and (3.10) of the paper, for some constant $C>0,$ with $1-o(1)$ probability, we have 
\begin{align*}
|| S-\hat{\mathcal{S}} ||_F^2= \sum_{k=1}^r d_k^2+\sum_{k=1}^q \hat{\eta}_k^2-2 \sum_{k=1}^q d_k \hat{\eta}_k\mu_{kk} \nu_{kk}+O(N^{-1/2+C\epsilon_0}),
\end{align*}
where we use Theorem 2.2 and 2.3 of the paper and (1) of Theorem 3.5. Therefore, the proofs come from part (1) of Theorem 3.5 and Proposition 3.3 of the paper. 
\end{proof}

\begin{proof}[Proof of Lemma 4.4]
(4.12) of the paper is from an elementary calculation.  For the proof of (4.13) and its monotonicity, choose any $ \ x > y > \lambda_{+},$ we have
\begin{equation*}
xm_{1c}(x)m_{2c}(x)-ym_{1c}(y)m_{2c}(y)=\frac{x-y-(g(x)-g(y))}{2c^{-1}}, 
\end{equation*}
where $ g(t):=\sqrt{(t+c^{-1}-1)^2-4c^{-1}t}.$ When $ t > \lambda_{+},$ we have
\begin{equation*}
g^{\prime}(t)=\frac{t-(c^{-1}+1)}{\sqrt{t^2+(c^{-1}-1)^2-2t(c^{-1}+1)}}>\frac{t-(c^{-1}+1)}{\sqrt{t^2+(c^{-1}+1)^2-2t(c^{-1}+1)}}=1,
\end{equation*}
where we need $t>\lambda_{+}$ to ensure the positiveness of $g(t)$. Hence, by the mean value theorem,  we conclude the proof.
\end{proof}

\begin{proof}[Proof of Lemma 4.6]
By an elementary computation on (4.13) of the paper , we have
\begin{equation*}
\mathcal{T}^{\prime}(p(d))=\frac{-1}{d^4-c^{-1}} \sim (d-c^{-1/4})^{-1}.
\end{equation*}
It is easy to check that there exists a constant $C>0,$ such that $| \mathcal{T}^{\prime \prime}(\xi) | \leq C$ for $\xi \in I_d.$ This concludes our proof by mean value theorem. 
  
\end{proof}

\begin{proof}[Proof of Lemma 4.8]
To prove (4.19) of the paper, we write
\begin{equation*}
\tilde{G}(z)=(H+\mathbf{U}\mathbf{D}\mathbf{U}^*-z)^{-1}.
\end{equation*}
The proof follows from the Woodbury matrix identity 
\begin{equation*}
(A+SBT)^{-1}=A^{-1}-A^{-1}S(B^{-1}+TA^{-1}S)^{-1}TA^{-1},
\end{equation*}
with $A=H-z, B=\mathbf{D}^{-1}, S=\mathbf{U}, T=\mathbf{U}^*.$ For the proof of (4.20), by (4.19) of the paper, we have
\begin{equation*}
\mathbf{U}^* \tilde{G}(z) \mathbf{U}=\mathbf{U}^*G(z)\mathbf{U}-\mathbf{U}^*G(z)\mathbf{U}(\mathbf{D}^{-1}+\mathbf{U}^*G(z)\mathbf{U})^{-1} \mathbf{U}^* G(z) \mathbf{U},
\end{equation*}
the proof  follows from the following identity
\begin{equation*}
A-A(A+B)^{-1}A=B-B(A+B)^{-1}B,
\end{equation*}
with $A=\mathbf{U}^*G(z)\mathbf{U}, B=\mathbf{D}^{-1}.$
\end{proof}

\begin{proof}[Proof of Lemma 5.3]
We first deal with (3.6). As $r$ is finite, we can choose a path $(\mathbf{d}(t): 0 \leq t \leq 1)$ connecting $\mathbf{d}(0)$ and $\mathbf{d}(1)$ having the following properties: \\

(i) For all $t \in [0,1]$, the point $\mathbf{d}(t) \in \mathcal{D}(\epsilon_0)$. \\

(ii) If $I_i^+(\mathbf{d}(1)) \cap I_j^+(\mathbf{d}(1))=\emptyset$ for a pair $1 \leq i<j \leq k^{+},$ then $I_i^+(\mathbf{d}(t)) \cap I_j^+(\mathbf{d}(t))=\emptyset$ for all $t \in [0,1].$ \\

Denote $\tilde{S}(t):=X+UD(t)V,$ where $D(t)$ is a diagonal matrix with elements $d_1(t), \cdots, d_r(t).$ As the mapping $t \rightarrow \tilde{S}(t)$ is continuous, we find that $\mu_{i}(t)$ is continuous in $t \in [0,1]$ for all $i,$ where  $\mu_{i}(t)$ are the eigenvalues of $\tilde{S}(t)\tilde{S}^*(t).$ Moreover, by Lemma 5.2 of the paper, we have 
\begin{equation} \label{persiible_t}
\bm{\sigma^+}(\tilde{S}(t)\tilde{S}^*(t)) \subset \Gamma(\mathbf{d}(t)), \forall \ t \in [0,1].
\end{equation}
In the case when the $k^+$ intervals are disjoint,  we have
\begin{equation*}
\mu_{i}(t) \in I_{i}^+(\mathbf{d}(t)), \ t \in [0,1],
\end{equation*}  
where we use property (ii) of the continuous path, (\ref{persiible_t}) and the continuity of $\mu_{i}(t).$ In particular, it holds true for $\mathbf{d}(1).$ Now we consider the case when they are not disjoint. Define $\mathcal{B}$ as a partition of $\{1, \cdots, k^+\} $ and denote the equivalent relation as
\begin{equation*}
i \equiv j  \ \   \text{if} \ \  I_{i}^+(\mathbf{d}(1)) \cap I_{j}^+(\mathbf{d}(1)) \neq \emptyset.
\end{equation*} 
Therefore, we can decompose $\mathcal{B}=\cup_{i} \mathcal{B}_i.$ It is notable that each $\mathcal{B}_i$ contains a sequence of consecutive integers.  Choose any $j \in \mathcal{B}_i, $ without loss of generality, we assume $j$ is not the smallest element  in $\mathcal{B}_i.$ Since they are not disjoint,  we have 
\begin{equation*}
 p^{\prime}(d_{j-1}^+)(d_{j-1}^+-d_j^+)\leq p(d_{j-1}^+)-p(d_{j}^+) \leq 2N^{-1/2+\epsilon_1+\epsilon_3}(d_{j-1}^+-c^{-1/4})^{1/2}, 
\end{equation*}
where we use the fact that $p^{\prime \prime}(x)>0$ and  (5.4) of the paper. This implies that 
\begin{equation*}
d^+_{j-1}-d^+_{j} \leq C N^{-1/2+\epsilon_1+\epsilon_3}(d_j^+-c^{-1/4})^{-1/2}, 
\end{equation*}
for some constant $C>0.$ By (5.5) of the paper,  we have
\begin{equation*}
(d_{j-1}^+-c^{-1/4})^{1/2} \leq (d_{j}^+-c^{-1/4})^{1/2}(1+\frac{d_{j-1}^+-d_j^+}{d_j^+-c^{-1/4}}) \leq (d_j^{+}-c^{-1/4})^{1/2}(1+o(1)).
\end{equation*}
Therefore, by repeating the process for the remaining $j \in \mathcal{B}_i,$ we find 
\begin{equation*}
\text{diam} (\cup_{j \in \mathcal{B}_i} I_{j}^+(\mathbf{d}(1))) \leq C N^{-1/2+C\epsilon_0} \min_{j \in \mathcal{B}_i}(d_j^+(1)-c^{-1/4})^{1/2}(1+o(1)), 
\end{equation*} 
where we use the fact that $r=O(1).$ This immediately yields that 
\begin{equation*}
| \mu_{i}(1)-p(d_i^{+}(1)) | \leq N^{-1/2+C\epsilon_0}(d_i^{+}(1)-c^{-1/4})^{1/2},
\end{equation*}
for some constant $C>0.$ This completes the proof of (3.6) of the paper. Finally, we deal with the extremal non-outlier eigenvalues (3.7) of the paper. 
By the continuity of $\mu_{i}(t)$ and Lemma 5.2 of the paper, we have
\begin{equation}\label{permissible_t2}
\bm{\sigma^0}(\tilde{S}(t)\tilde{S}^*(t)) \subset I^0(t), \ t \in [0,1].
\end{equation}
In particular it holds true for $\mathbf{d}(1).$  This concludes our proof. 
\end{proof}

\begin{proof}[Proof of Lemma 5.5]
A crucial estimate is the following lemma, which can be found in \cite[Lemma 5.6]{BKYY}. Define the boundary of $B_{\rho_k}(d_k)$ as $\partial B_{\rho_k}(d_k),$ then we have
\begin{lem} Denote
\begin{equation*}
\Gamma_k=\Gamma \cap \partial B_{\rho_k}(d_k),
\end{equation*}
then for $k \in A$, and $\zeta \in \Gamma_k$,  recall (5.13) of the paper, we have
\begin{equation} \label{sij2differencebound}
|\zeta-d_l| \sim \rho_k+|d_k-d_l|, \ 1 \leq l \leq r.
\end{equation}
\end{lem}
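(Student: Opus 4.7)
My plan is to prove the two-sided bound $|\zeta - d_l| \sim \rho_k + |d_k - d_l|$ by establishing the upper and lower bounds separately, with a short case analysis driving the lower bound. The upper bound is immediate: since $\zeta \in \partial B_{\rho_k}(d_k)$ we have $|\zeta - d_k| = \rho_k$, so the triangle inequality gives $|\zeta - d_l| \leq \rho_k + |d_k - d_l|$.

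For the matching lower bound I would distinguish regimes by the ratio $|d_k - d_l|/\rho_k$. In the far regime $|d_k - d_l| \geq 4\rho_k$, the reverse triangle inequality gives $|\zeta - d_l| \geq |d_k - d_l| - \rho_k \geq \tfrac{3}{4}|d_k - d_l|$, which is comparable to $\rho_k + |d_k - d_l|$ because $\rho_k \leq \tfrac{1}{4}|d_k - d_l|$ in this regime. In the close regime $|d_k - d_l| < 4\rho_k$ the target $\rho_k + |d_k - d_l|$ is itself of order $\rho_k$, so it suffices to show $|\zeta - d_l| \gtrsim \rho_k$, which I would handle in three subcases: (i) if $l = k$ then $|\zeta - d_l| = \rho_k$ exactly; (ii) if $l \notin A$ then the definition $\nu_k = \min_{j \notin A}|d_k - d_j|$ together with $\rho_k \leq \nu_k/2$ forces $|d_k - d_l| \geq 2\rho_k$, so $|\zeta - d_l| \geq |d_k - d_l| - \rho_k \geq \rho_k$; (iii) if $l \in A$ with $l \neq k$, I would use that $\zeta$ lies on the topological boundary $\Gamma = \partial\Upsilon$ of the union and hence cannot lie in the open disc $B_{\rho_l}(d_l)$, yielding $|\zeta - d_l| \geq \rho_l$, and then compare $\rho_l$ with $\rho_k$ via the triangle inequality applied to both defining quantities $d_l - c^{-1/4}$ and $|d_l - d_j|$ for $j \notin A$.

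The main obstacle will be subcase (iii). The radius $\rho_k$ is constrained only through distances from $d_k$ to indices outside $A$, not to other points of $A$, so the comparison $\rho_l \gtrsim \rho_k$ is not automatic. The key is to propagate the lower bounds on $d_k - c^{-1/4}$ and $\nu_k$ into corresponding lower bounds on $d_l - c^{-1/4}$ and $\nu_l$ using the constraint $|d_k - d_l| \lesssim \rho_k \leq \tfrac{1}{2}\min(\nu_k, d_k - c^{-1/4})$; this yields $\rho_l \gtrsim \rho_k$ with an absolute constant. Combining the three subcases with the far-regime estimate then gives the required lower bound $|\zeta - d_l| \gtrsim \rho_k + |d_k - d_l|$, completing the proof.
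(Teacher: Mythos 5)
The paper itself never proves this lemma (it is quoted from \cite[Lemma 5.6]{BKYY}), so a self-contained argument is welcome, and your overall architecture is the right one: triangle inequality for the upper bound, and for the lower bound a dichotomy between $|d_k-d_l|$ large compared with $\rho_k$ (reverse triangle inequality) and small, where you use $|\zeta-d_k|=\rho_k$, the bound $\nu_k\geq 2\rho_k$ when $l\notin A$, and the fact that $\zeta\in\partial\Upsilon$ implies $\zeta\notin B_{\rho_l}(d_l)$, i.e.\ $|\zeta-d_l|\geq\rho_l$, when $l\in A$.

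However, subcase (iii) as you set it up has a quantitative hole: your close regime is $|d_k-d_l|<4\rho_k$, but the propagation step only gives $\nu_l\geq\nu_k-|d_k-d_l|\geq 2\rho_k-|d_k-d_l|$ and $d_l-c^{-1/4}\geq(d_k-c^{-1/4})-|d_k-d_l|\geq 2\rho_k-|d_k-d_l|$, which is a useful lower bound only when $|d_k-d_l|\leq(2-\delta)\rho_k$. In the band $|d_k-d_l|\in[2\rho_k,4\rho_k)$ the claim $\rho_l\gtrsim\rho_k$ is simply false: take $\nu_k=2\rho_k$ attained by some $d_j$ with $j\notin A$, and $d_l\in A$ at distance $\epsilon$ from $d_j$ on the far side from $d_k$, so that $|d_k-d_l|=2\rho_k+\epsilon$ lies in your close regime while $\nu_l\leq\epsilon$ and hence $\rho_l\leq\epsilon/2\ll\rho_k$; the same failure occurs for $d_l-c^{-1/4}$ when $\rho_k=\tfrac12(d_k-c^{-1/4})$ and $d_l<d_k$. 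The desired estimate still holds in that band, but only through the mechanism of your far regime, $|\zeta-d_l|\geq|d_k-d_l|-\rho_k\geq\rho_k$, not through the radius comparison. The repair is just to move the threshold: split instead at $|d_k-d_l|\geq\tfrac32\rho_k$, where the reverse triangle inequality gives $|\zeta-d_l|\geq\tfrac13|d_k-d_l|\geq\tfrac15(\rho_k+|d_k-d_l|)$, versus $|d_k-d_l|<\tfrac32\rho_k$, where now $\nu_l\geq\tfrac12\rho_k$ and $d_l-c^{-1/4}\geq\tfrac12\rho_k$, hence $\rho_l\geq\tfrac14\rho_k$ and $|\zeta-d_l|\geq\rho_l\geq\tfrac1{10}(\rho_k+|d_k-d_l|)$; note that with this threshold your subcase (ii) becomes vacuous, since $l\notin A$ forces $|d_k-d_l|\geq\nu_k\geq2\rho_k$. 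With that adjustment your proof is complete.
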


\noindent By  (4.9), (5.12) of the paper and the fact $r$ is finite, it is easy to check that
\begin{equation} \label{sij2boundinitial}
|S^{(2)}| \leq \int_{\Gamma} \frac{d^2_i d^2_j N^{-1+2\epsilon_1}}{|\zeta-d_i||\zeta-d_j|} \left | \left | \frac{1}{\mathbf{D}^{-1}+\mathbf{U}^*G(p(\zeta))\mathbf{U}} \right | \right | |d \zeta|. 
\end{equation}
We now assume  $\zeta \in \Gamma_k,$ by the resolvent expansion,  we have
\begin{align} \label{sij2resolvent}
(\mathbf{D}^{-1}&+\mathbf{U}^*G(p(\zeta))\mathbf{U})^{-1}=  (\mathbf{D}^{-1}+\mathbf{U}^*\Pi(p(\zeta))\mathbf{U})^{-1}  \nonumber \\
+ & (\mathbf{D}^{-1}+\mathbf{U}^*\Pi(p(\zeta))\mathbf{U})^{-1}\Delta(p(\zeta))(D^{-1}+\mathbf{U}^*G(p(\zeta))\mathbf{U})^{-1}.
\end{align}  
By (5.12) of the paper, we have
\begin{equation} \label{sij2bound1}
\left | \left | \Delta(p(\zeta)) \right | \right | \leq |p(\zeta)-\lambda_+|^{-1/4} N^{-1/2+\epsilon_1} \leq (d_k-c^{-1/4})^{-1/2}N^{-1/2+\epsilon_1}.
\end{equation}
For $1 \leq l \leq r, $  by Lemma 4.6 and (4.9) of the paper,  there exists some constant $\delta>0$, such that
\begin{equation}\label{sij2bound2}
|\mathcal{T}(p(\zeta))-d_l^{-2}| \geq  \delta (|\zeta-d_l| \wedge c^{-1/4}) \geq \delta|\zeta-d_k|=\delta\rho_k \geq \delta(d_k-1)^{-1/2}N^{-1/2+\epsilon_0},
\end{equation}
where in the last step we use (5.14) of the paper. Hence,  by (5.20) of the paper, (\ref{sij2resolvent}) and (\ref{sij2bound1}), we have
\begin{equation} \label{sij2matrixnormbound}
\left | \left | (\mathbf{D}^{-1}+\mathbf{U}^*G(p(\zeta))\mathbf{U})^{-1} \right | \right | \leq \frac{C}{\rho_k}. 
\end{equation}
 Decomposing $\Gamma$ into $\Gamma=\cup_{k \in A} \Gamma_k$, by (\ref{sij2differencebound}), (\ref{sij2boundinitial}), (\ref{sij2matrixnormbound}) and the fact $\Gamma_k$ has length $2\pi \rho_k$, we have
\begin{equation} \label{sij2secondbound}
|S^{(2)}| \leq C \sum_{k \in A} \sup_{\zeta \in \Gamma_k} \frac{d^2_i d^2_jN^{-1+2\epsilon_1}}{|\zeta-d_i||\zeta-d_j|} \leq C \sum_{k \in A} \frac{d^2_i d^2_jN^{-1+2\epsilon_1}}{(\rho_k+|d_k-d_i|)(\rho_k+|d_k-d_j|)},
\end{equation}   
for some constant $C>0.$ To estimate the right-hand side of  (\ref{sij2secondbound}),  for $i \notin A,$ by (5.14) of the paper,  we have that
\begin{equation*}
\rho_k \leq d_{k}-c^{-1/4} \leq |d_k-c^{-1/4}+c^{-1/4}-d_i| \leq |d_k-d_i|,
\end{equation*}
from which we conclude
\begin{equation*}
\sum_{k \in A} \frac{1}{(\rho_k+|d_k-d_i|)^2} \leq \sum_{k \in A} \frac{1}{|d_k-d_i|^2} \leq \frac{C}{\nu_i^2(A)}, 
\end{equation*}
where we use the fact that $r$ is finite. Similarly,  for $i \in A,$  we have $|d_k-d_i| \leq \rho_k$.  Combining with the fact $\rho_k+|d_i-d_k| \geq \rho_i$ for all $k \in A$, we have
\begin{equation*}
\sum_{k \in A} \frac{1}{(\rho_k+|d_k-d_i|)^2} \leq \frac{C}{\rho_i^2} \leq \frac{C}{\nu_i^2(A)}+\frac{C}{(d_i-c^{-1/4})^2},
\end{equation*}
for some constant $C>0. $
Combine with (\ref{sij2secondbound}),  we have
\begin{equation}\label{sij2boundfinal}
|S^{(2)}| \leq C d^2_i d^2_jN^{-1+2\epsilon_1}(\frac{1}{\nu_i}+\frac{\mathbf{1}(i \in A)}{|d_i-c^{-1/4}|})(\frac{1}{\nu_j}+\frac{\mathbf{1}(j \in A)}{|d_j-c^{-1/4}|}),
\end{equation} 
for some constant $C>0.$
\end{proof}

\begin{proof}[Proof of Lemma 5.6]
In the case $|d_i-c^{-1/4}|\leq \frac{1}{2},$ we have that (see the equation above (6.11) of \cite{BKYY})
\begin{equation*}
|1-d_i^2 \mathcal{T}(z)| \geq d_i^2[\max\{|d_i^{-2}-c^{1/2}|-|\operatorname{Re} \mathcal{T}(z)-c^{1/2}|, 0\}+\operatorname{Im}\mathcal{T}(z)].
\end{equation*}
By \cite[(6.11)]{BKYY}, we have that for any $y \leq tz, \ t \geq 1, $ 
\begin{equation}\label{basicinequality}
\max\{x-y,0\}+z \geq \frac{x}{3t}+\frac{z}{3}.
\end{equation}
For $\mu_j \in [\lambda_{-}, \ \lambda_+],$ by Lemma 4.5 and (5.31) of the paper,  using $t=C$ in (\ref{basicinequality}), we find that there exists some constant $\delta>0,$ such that
\begin{equation*}
|1-d_i^2 \mathcal{T}(z)| \geq \delta d_i^2\left( |d_i^{-2}-c^{1/2}|+\operatorname{Im}\mathcal{T}(z) \right).
\end{equation*}
When $\mu_j \in [\lambda_+,\ \lambda_++N^{-2/3+C\epsilon_0}],$ choosing $t=K^{C\epsilon_0}$ in (\ref{basicinequality}) and using (5.31) of the paper, we get 
\begin{equation*}
|1-d_i^2 \mathcal{T}(z)| \geq \delta d_i^2\left(|d_i^{-2}-c^{1/2}|+\operatorname{Im}\mathcal{T}(z) \right).
\end{equation*}
When  $|d_i-c^{-1/4}|\geq \frac{1}{2},$ by Lemma 4.5 of the paper, for $\mu_j \in [\lambda_{-}, \lambda_{+}+N^{-2/3+C\epsilon_0}],$ we have
\begin{equation*}
|1-d_i^2\mathcal{T}(z)| \geq  \delta d_i^2(|d_i^{-2}-c^{1/2}|+\operatorname{Im} \mathcal{T}(z)).
\end{equation*}  

\end{proof}

\end{document}